\newcommand{\abs}[1]{\left|{#1}\right|}
\newcommand{\sm}[4]{{\bigl(\begin{smallmatrix}{#1}&{#2}\\{#3}&{#4}\end{smallmatrix}\bigr)}}
\newcommand{\GL}{\operatorname{GL}}
\newcommand{\R}{\mathbb{R}}
\newcommand{\C}{\mathbb{C}}
\newcommand{\Z}{\mathbb{Z}}
\newcommand{\one}{\mathbf{1}}
\newcommand{\sw}{\varsigma}
\newcommand{\JL}{\operatorname{JL}}
\newcommand{\Irr}{\operatorname{Irr}}
\newcommand{\Nrd}{\operatorname{Nrd}}
\newcommand{\Hom}{\operatorname{Hom}}
\newcommand{\rk}{n}%{\mathbf{n}}
\newcommand{\lsg}{r}%{\mathbf{r}}
\newcommand{\trs}{\mathbb{T}}
\newcommand{\pl}{\operatorname{pl}}
\newcommand{\un}{\kappa}
\newcommand{\dist}{\delta}
\newcommand{\OO}{\mathcal{O}}
\newcommand{\vol}{\operatorname{vol}}
\newcommand{\tor}{t}%{\mathbf{t}}
\newcommand{\inv}{\iota}
\newcommand{\rest}{\lvert}
\newcommand{\prm}{\mathcal{P}}
\newtheorem{theorem}{Theorem}
\newtheorem{lemma}{Lemma}
\newtheorem{proposition}{Proposition}
\newtheorem{corollary}{Corollary}
\newtheorem{remark}{Remark}
\begin{document}

\title{On an inequality of Bushnell--Henniart for Rankin--Selberg conductors}
\author{Erez Lapid}
\address{Department of Mathematics, Weizmann Institute of Science, Rehovot 7610001, Israel}
\email{erez.m.lapid@gmail.com}
%\date{\today}

\begin{abstract}
We prove a division algebra analogue of an ultrametric inequality of Bushnell--Henniart for Rankin--Selberg conductors.
Under the Jacquet--Langlands correspondence, the two versions are equivalent.
\end{abstract}

\maketitle

\tableofcontents

\section{Introduction}

Let $F$ be a non-archimedean local field with residue field of size $q$. For any two smooth, generic, irreducible representations
$\pi_i$ of $\GL_{n_i}(F)$, $i=1,2$, Jacquet--Piatetski-Shapiro--Shalika defined \cite{MR701565} local factors
\[
L(s,\pi_1\times\pi_2),\ \ \epsilon(s,\pi_1\times\pi_2,\psi).
\]
The first is an inverse of a polynomials in $q^{-s}$ and the second is a monomial in $q^{-s}$ that
depends on a choice of a non-trivial character $\psi$ of $F$. We can write
\[
\epsilon(s,\pi_1\times\pi_2,\psi)=\epsilon(0,\pi_1\times\pi_2,\psi)q^{-(f(\pi_1\times\pi_2)+n_1n_2c(\psi))s}
\]
where $f(\pi_1\times\pi_2)\in\Z$ is the \emph{Rankin--Selberg conductor} and $c(\psi)$ is a certain integer depending
only on $\psi$. If $\psi$ is trivial on the ring of integers of $F$ but on no larger fractional ideal of $F$, then $c(\psi)=0$.
These local factors play an important role in the Plancherel decomposition of $\GL_n(F)$ \cites{MR729755, MR1070599}.
More precisely, if $\pi_1,\pi_2$ are square-integrable, then the density with respect to the Lebesgue measure
of the Plancherel measure  on $\GL_{n_1+n_2}(F)$ corresponding to discrete data
$(\GL_{n_1}(F)\times\GL_{n_2}(F),\pi_1\otimes\pi_2)$ is essentially given by
\[
\gamma(s,\pi_1\times\pi_2^\vee,\psi)\gamma(-s,\pi_1^\vee\times\pi_2,\psi^{-1})=
q^{f(\pi_1\times\pi_2^\vee)}\frac{L(1-s,\pi_1^\vee\times\pi_2)L(1+s,\pi_1\times\pi_2^\vee)}
{L(s,\pi_1\times\pi_2^\vee)L(-s,\pi_1^\vee\times\pi_2)}
\]
where
\[
\gamma(s,\pi_1\times\pi_2,\psi)=
\epsilon(s,\pi_1\times\pi_2,\psi)\frac{L(1-s,\pi_1^\vee\times\pi_2^\vee)}{L(s,\pi_1\times\pi_2)}.
\]
The factors $L(s,\pi_1\times\pi_2)$ are well-understood with regard to the Bernstein classification of square-integrable
representations in terms of cuspidal ones.
This is closely related to the fact that the reducibility points of the induced representations
$I(\pi_1\otimes\pi_2,s)$ are well understood.
On the other hand, the conductor $f(\pi_1\times\pi_2)$ is more mysterious, which is not surprising in view of
the local Langlands correspondence.
For instance, without the latter, it is not even clear why $f(\pi_1\times\pi_2)\ge0$.

The following interesting ultrametric property of the conductors was proved by Bushnell--Henniart.
\begin{theorem}[\cite{MR3664814}] \label{thm: ultra}
Let $\pi_i$ be irreducible cuspidal representations of $\GL_{n_i}(F)$, $i=1,2,3$. Then
\[
\frac{f(\pi_1\times\pi_3^\vee)}{n_1n_3}\le\max(\frac{f(\pi_1\times\pi_2^\vee)}{n_1n_2},\frac{f(\pi_2\times\pi_3^\vee)}{n_2n_3}).
\]
Consequently,
\[
\frac{f(\pi_1\times\pi_2^\vee)}{n_1n_2}\ge\max(\frac{f(\pi_1\times\pi_1^\vee)}{n_1^2},\frac{f(\pi_2\times\pi_2^\vee)}{n_2^2}).
\]
\end{theorem}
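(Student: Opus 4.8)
The plan is to transport the inequality to the Galois side through the local Langlands correspondence and then read off the conductors from the ramification filtration. Write $\sigma_i=\operatorname{rec}(\pi_i)$ for the irreducible $n_i$-dimensional representation of the Weil group $W_F$ attached to $\pi_i$; compatibility of the Rankin--Selberg and Artin--Deligne local constants gives $f(\pi_i\times\pi_j^\vee)=a(\sigma_i\otimes\sigma_j^\vee)$, where $a(\cdot)$ is the Artin conductor exponent. So it suffices to prove, for irreducible representations $\sigma_i$ of $W_F$ of dimension $n_i$,
\[
\frac{a(\sigma_1\otimes\sigma_3^\vee)}{n_1n_3}\le\max\Bigl(\frac{a(\sigma_1\otimes\sigma_2^\vee)}{n_1n_2},\frac{a(\sigma_2\otimes\sigma_3^\vee)}{n_2n_3}\Bigr).
\]
I would keep in mind two standard facts: that $a(\rho)=\dim\rho-\dim\rho^{I_F}+\operatorname{Sw}(\rho)$ with $\operatorname{Sw}(\rho)=\int_0^\infty(\dim\rho-\dim\rho^{G_F^{(u)}})\,du$ in the upper-numbering ramification filtration, the integrand being supported on $(0,\lambda(\rho)]$ for $\lambda(\rho)$ the largest break of $\rho$; and that, by Clifford's theorem applied to the normal subgroups $G_F^{(u)}$ of $W_F$, each restriction $\sigma_i|_{G_F^{(u)}}$ is semisimple, with its irreducible constituents forming a single $W_F$-orbit and all occurring with one common multiplicity. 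In particular each $\sigma_i$ is isoclinic, with a single slope $\lambda_i=\lambda(\sigma_i)$.

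First I would dispatch the case where $\lambda_1,\lambda_2,\lambda_3$ are not all equal. Here one uses the standard fact that a tensor product of two $I_F$-representations of distinct slopes $a>b$ is isoclinic of slope $a$, together with the observation that every break of $A\otimes B$ is at most the larger of the largest breaks of $A$ and of $B$. Since $\sigma_i$ occurs in $(\sigma_i\otimes\sigma_j^\vee)\otimes\sigma_j$, its slope $\lambda_i$ is bounded by the larger of $\lambda_j$ and the largest break of $\sigma_i\otimes\sigma_j^\vee$; hence if $\lambda_i\ne\lambda_j$ then $\sigma_i\otimes\sigma_j^\vee$ is isoclinic of slope $\max(\lambda_i,\lambda_j)$ and has no inertia invariants (a common $I_F$-constituent of $\sigma_i$ and $\sigma_j$ would force $\lambda_i=\lambda_j$), so $a(\sigma_i\otimes\sigma_j^\vee)/(n_in_j)=\max(\lambda_i,\lambda_j)+1$; whereas in all cases $a(\sigma_i\otimes\sigma_j^\vee)/(n_in_j)\le\lambda(\sigma_i\otimes\sigma_j^\vee)+1\le\max(\lambda_i,\lambda_j)+1$. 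A short check of the subcases --- according to whether $\lambda_1\ne\lambda_3$, or $\lambda_1=\lambda_3\ne\lambda_2$ --- then shows that one of the two right-hand terms already dominates the left-hand side.

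The substantive case is $\lambda_1=\lambda_2=\lambda_3=:\lambda$. For a pair $(i,j)$ put $h_{ij}(u)=\dim(\sigma_i\otimes\sigma_j^\vee)^{G_F^{(u)}}=\dim\operatorname{Hom}_{G_F^{(u)}}(\sigma_j,\sigma_i)$, let $u_{ij}=\inf\{u>0:h_{ij}(u)>0\}$ (the depth at which $\sigma_i$ and $\sigma_j$ first acquire a common $G_F^{(u)}$-constituent), and put $g_i(u)=\dim\operatorname{End}_{G_F^{(u)}}(\sigma_i)/n_i^2$. Two observations drive the argument: (a) for $u>u_{ij}$ the constituent-sets of $\sigma_i|_{G_F^{(u)}}$ and $\sigma_j|_{G_F^{(u)}}$ are two $W_F$-orbits that meet, hence coincide, so that ``sharing a $G_F^{(u)}$-constituent'' is a transitive relation in $u$ and therefore $u_{13}\le\max(u_{12},u_{23})$; and (b) on that same range, comparing the common orbit and the common dimension of its members gives $h_{ij}(u)/(n_in_j)=g_i(u)=g_j(u)$. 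Feeding (b) into the conductor formula shows that for $u_{ij}>0$ the representation $\sigma_i\otimes\sigma_j^\vee$ has no inertia invariants and $a(\sigma_i\otimes\sigma_j^\vee)/(n_in_j)=\lambda+1-\int_{u_{ij}}^{\lambda}g_i(u)\,du$. Since $g_i\ge0$ and $g_i$ agrees with $g_j$ on $(u_{ij},\lambda]$, assuming without loss of generality that $u_{12}\le u_{23}$ (whence $u_{13}\le u_{23}$ by (a)) one obtains $\int_{u_{13}}^\lambda g_1\ge\int_{u_{23}}^\lambda g_1=\int_{u_{23}}^\lambda g_2$, that is $a(\sigma_1\otimes\sigma_3^\vee)/(n_1n_3)\le a(\sigma_2\otimes\sigma_3^\vee)/(n_2n_3)$, which is more than enough. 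The leftover is the ``depth zero'' subcase $u_{12}=u_{23}=0$ (forcing $u_{13}=0$), where the formula acquires a tame term, $a(\sigma_i\otimes\sigma_j^\vee)/(n_in_j)=\lambda+1-v_{ij}-\int_0^\lambda g_i$ with $v_{ij}=\dim\operatorname{Hom}_{I_F}(\sigma_j,\sigma_i)/(n_in_j)$; since all three $g_i$ now agree on $(0,\lambda]$, this reduces to the ultrametric inequality for the numbers $v_{ij}$, which follows from the same orbit argument run with $I_F$ in place of $G_F^{(u)}$ (the nonzero $v_{ij}$ are a common normalized dimension, and ``sharing an $I_F$-constituent'' is again transitive). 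I expect the genuine work, and the main obstacle, to be concentrated here: establishing the isotypic structure of the restrictions to ramification subgroups, pinning down the conductor formula with its tame bookkeeping, and running the subcases. On the automorphic side this input is exactly Bushnell--Kutzko's theory of simple strata, simple characters and their intertwining, which is presumably the path Bushnell--Henniart take; the Galois-side route sketched here is a shortcut resting on the local Langlands correspondence.

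Finally, the ``Consequently'' clause is formal: the Rankin--Selberg conductor is symmetric in its two arguments and invariant under $\vee$, so $f(\pi_j\times\pi_i^\vee)/(n_in_j)=f(\pi_i\times\pi_j^\vee)/(n_in_j)$; applying the main inequality to $(\pi_1,\pi_2,\pi_1)$ and to $(\pi_2,\pi_1,\pi_2)$ gives that $f(\pi_1\times\pi_1^\vee)/n_1^2$ and $f(\pi_2\times\pi_2^\vee)/n_2^2$ are each at most $f(\pi_1\times\pi_2^\vee)/(n_1n_2)$, hence the stated lower bound by the maximum.
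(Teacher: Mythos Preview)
Your argument is essentially correct and is close to the Weil-group route of \cite{MR3711826}, which the paper cites but deliberately does not follow. The paper takes a genuinely different path: rather than passing to the Galois side via the local Langlands correspondence, it passes to a central division algebra $D$ of degree $n=n_1n_2n_3$ via the Jacquet--Langlands correspondence. There the role of your ramification filtration $(G_F^{(u)})_{u\ge0}$ is played by the congruence filtration $(U_m)_{m\ge1}$ of $D^*$, your depth $u_{ij}$ corresponds to the paper's integer-valued $\delta_{\pi_i,\pi_j}$, and your integral $\int g_i(u)\,du$ corresponds to the sum $\iota_{\pi_1,\pi_2}=\sum_m\vol(U_m)\dim\Hom_{U_m}(\pi_1,\pi_2)$. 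The Clifford-theoretic core is the same in spirit --- transitivity of ``sharing a constituent at depth $\ge m$'' and equality of the normalized $\Hom$-dimensions past that depth --- but the paper then identifies $\iota_{\pi_1,\pi_2}$ with (the inverse of) the Plancherel density for $\GL_2(D)$ via an explicit computation of intertwining operators, and matches it to the Rankin--Selberg conductor through the compatibility of Plancherel measures under Jacquet--Langlands. What your approach buys is a direct link to the Artin conductor and its ramification formula; what the paper's approach buys is independence from the local Langlands correspondence (a far deeper input than Jacquet--Langlands) and a self-contained computation that avoids even the classification of $\widehat{D^*}$. Your closing guess that the paper follows the Bushnell--Kutzko type-theoretic route is off: that is the method of the original \cite{MR3664814}, whereas the present paper's point is precisely to give a third proof avoiding both type theory and local Langlands.
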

The result (and much more) is proved in \cites{MR1634095, MR3664814} using type theory and the classification of cuspidal
representations of $\GL_n$ \cite{MR1204652}.
The analogous result (under the local Langlands correspondence) for representations of the Weil--Deligne group was considered in
\cite{MR3711826}.
The purpose of this paper is to give an elementary short proof of an analogue of Theorem \ref{thm: ultra} for central division algebras
(see Proposition \ref{prop: CSdiv}).
Using the Jacquet--Langlands correspondence, one can then recover Theorem \ref{thm: ultra} in its original formulation (see \S\ref{sec: JL}).

In the course of the proof we will encounter an intriguing representation-theoretic interpretation of $f(\pi_1\times\pi_2)$ where $\pi_1,\pi_2$
are irreducible (finite-dimensional) representations of $D^*$ where $D$ is a central division algebra over $F$ (see \S\ref{sec: pl}).
In lieu of Rankin--Selberg theory, the conductor $f(\pi_1\times\pi_2)$ is defined, following Langlands, using the Plancherel measure with respect to $\GL_2(D)$.
The computation of the Plancherel measure is in the spirit of \cite{MR1606410}. However, it is simpler in our case and in particular,
we do not use the classification of irreducible representations of $D^*$.
On the other hand, without the Jacquet--Langlands correspondence, it is not clear that $f(\pi_1\times\pi_2)$ is an integer.

For the rest of the paper let $D$ be a central division algebra of degree $\rk$ over $F$.
Denote by $\abs{\cdot}$ the normalized absolute value on $F$ and by $\Nrd$ the reduced norm on $D$.
Let $\OO=\OO_D$ be the ring of integers of $D$ and let $\OO_D^*$ be the group of units of $\OO_D$.
We will take the Haar measure $d^*x$ on $D^*$ normalized by $\vol(\OO_D^*)=1$.
Denote by $v=v_D$ the valuation of $D$.
Thus, $\abs{\Nrd g}=q^{-v(g)}$ where $q=q_F$.
Let $\prm$ be the maximal ideal of $\OO$.
Let $U_m=1+\prm^m$, $m\ge1$ be the principal congruence subgroups of $D^*$, which are normal.
The group $D^*/\OO_D^*$ is infinite cyclic and the groups $D^*/F^*\OO_D^*$ and $\OO_D^*/U_1$ are cyclic of order $\rk$ and $q^{\rk}-1$
respectively.
Note that $\vol U_m=[\OO^*:U_m]^{-1}=\frac{q^{-m\rk}}{1-q^{-\rk}}$.
Denote by $\widehat{D^*}$ the set of equivalence classes of irreducible (finite-dimensional) unitary representations of $D^*$.\footnote{
The unitary restriction is made merely for convenience and is inconsequential.}
For any $\pi\in\widehat{D^*}$ denote by $d_\pi$ the dimension of $\pi$.
%Let $\pi_1,\pi_2$ be two irreducible (finite-dimensional) unitary representations of $D^*$.
%When $\pi_1=\pi_2$ we simply write $\pi=\pi_1=\pi_2$.

\section{A distance function} \label{sec: dist}

For any $\pi_1,\pi_2\in\widehat{D^*}$ define $\dist_{\pi_1,\pi_2}$ to be the smallest integer $m\ge0$ such that
$\pi_1\rest_{U_m}$ and $\pi_2\rest_{U_m}$ have a common irreducible constituent, i.e., $\Hom_{U_m}(\pi_1,\pi_2)\ne0$.
This definition makes sense for any group admitting a distinguished descending series of normal subgroups.
For the case of the Weil group see \cite{MR1411044}.
Clearly, $\dist_{\pi_1,\pi_2}=\dist_{\pi_2,\pi_1}$.

The following is a immediate consequence of Clifford theory.
\begin{lemma} \label{lem: cliff}
Let $\pi_1,\pi_2\in\widehat{D^*}$.
\begin{enumerate}
\item The following conditions are equivalent for $m\ge0$.
\begin{enumerate}
\item $m\ge\dist_{\pi_1,\pi_2}$
\item $\Hom_{U_m}(\pi_1,\pi_2)\ne0$.
\item $d_{\pi_2}\pi_1\rest_{U_m}\simeq d_{\pi_1}\pi_2\rest_{U_m}$ where $k\sigma$ denotes the direct sum of $k$ copies of $\sigma$.
\end{enumerate}
In this case,
\[
\frac{\dim\Hom_{U_m}(\pi_1,\pi_2)}{d_{\pi_1}d_{\pi_2}}=\frac{k}{d_{\sigma}^2}
\]
where $\sigma$ is an irreducible constituent of $\pi_j\rest_{U_m}$ and $k$ is the number of irreducible constituents of $\pi_j\rest_{U_m}$
up to equivalence (counted without multiplicity). (Neither $k$ nor $d_\sigma$ depends on either $\sigma$ or $j\in\{1,2\}$.)
\item For any $\pi_1,\pi_2,\pi_3\in\widehat{D^*}$
%\begin{equation} \label{eq: ultra}
\[
\dist_{\pi_1,\pi_3}\le\max(\dist_{\pi_1,\pi_2},\dist_{\pi_2,\pi_3}).
\]
%\end{equation}
Moreover, if $m\ge\max(\dist_{\pi_1,\pi_2},\dist_{\pi_2,\pi_3})$ then
\[
\frac{\dim\Hom_{U_m}(\pi_1,\pi_3)}{d_{\pi_1}d_{\pi_3}}=\frac{\dim\Hom_{U_m}(\pi_1,\pi_2)}{d_{\pi_1}d_{\pi_2}}=\frac{\dim\Hom_{U_m}(\pi_2,\pi_3)}{d_{\pi_2}d_{\pi_3}}.
\]
\item $\dist_{\pi_1,\pi_2}=0$ if and only if $\pi_2$ is equivalent to a twist of $\pi_1$ by an unramified character.
\end{enumerate}
\end{lemma}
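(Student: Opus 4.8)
The plan is to deduce all three parts from Clifford theory applied to the descending chain of normal subgroups $U_m$ of $D^*$ (with the convention $U_0=\OO_D^*$), so I would begin by recording the one structural input needed. Since each $U_m$ is compact, for any $\pi\in\widehat{D^*}$ the restriction $\pi\rest_{U_m}$ is a finite direct sum of finite-dimensional irreducibles, and Clifford's theorem writes it as $\pi\rest_{U_m}\simeq e\,(\sigma_1\oplus\dots\oplus\sigma_k)$, where $\sigma_1,\dots,\sigma_k$ form a single orbit of irreducible representations of $U_m$ under conjugation by $D^*$ — so they share a common dimension $d_\sigma$ — and $e\ge1$ is their common multiplicity; hence $d_\pi=ekd_\sigma$. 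Two easy remarks then give the equivalence of (a) and (b): because $\pi_1,\pi_2$ are finite-dimensional they are trivial on $U_m$ for $m$ large, so $\Hom_{U_m}(\pi_1,\pi_2)\ne0$ for $m\gg0$ and $\dist_{\pi_1,\pi_2}$ makes sense; and if $\sigma$ is a common irreducible constituent of $\pi_1\rest_{U_m}$ and $\pi_2\rest_{U_m}$, then any irreducible constituent of $\sigma\rest_{U_{m+1}}$ is common to $\pi_1\rest_{U_{m+1}}$ and $\pi_2\rest_{U_{m+1}}$, so the nonvanishing of $\Hom_{U_m}$ propagates to all larger $m$. Thus $\{m\ge0:\Hom_{U_m}(\pi_1,\pi_2)\ne0\}=\{m:m\ge\dist_{\pi_1,\pi_2}\}$.

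Next I would finish part (1). If $\Hom_{U_m}(\pi_1,\pi_2)\ne0$, the restrictions $\pi_1\rest_{U_m}$ and $\pi_2\rest_{U_m}$ share an irreducible constituent; since a $D^*$-orbit is determined by any one of its elements, they involve the \emph{same} orbit $\sigma_1,\dots,\sigma_k$, so $k$ and $d_\sigma$ are independent of the index and $\pi_j\rest_{U_m}\simeq e_j\,(\sigma_1\oplus\dots\oplus\sigma_k)$ with $d_{\pi_j}=e_jkd_\sigma$. Multiplying the $j$-th decomposition by $d_{\pi_{3-j}}$ gives $d_{\pi_2}\,\pi_1\rest_{U_m}\simeq (e_1e_2kd_\sigma)(\sigma_1\oplus\dots\oplus\sigma_k)\simeq d_{\pi_1}\,\pi_2\rest_{U_m}$, which is (c); and (c)$\Rightarrow$(b) is trivial since the modules in (c) are nonzero. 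Finally, Schur's lemma gives $\dim\Hom_{U_m}(\sigma_i,\sigma_j)=0$ for $i\ne j$ and $=1$ for $i=j$, so $\dim\Hom_{U_m}(\pi_1,\pi_2)=e_1e_2k$; dividing by $d_{\pi_1}d_{\pi_2}=e_1e_2k^2d_\sigma^2$ gives the asserted normalized value, which depends only on $k$ and $d_\sigma$. Part (2) is now formal: for $m\ge\max(\dist_{\pi_1,\pi_2},\dist_{\pi_2,\pi_3})$, part (1) makes $\pi_1\rest_{U_m}$, $\pi_2\rest_{U_m}$, $\pi_3\rest_{U_m}$ all involve the one orbit $\sigma_1,\dots,\sigma_k$; in particular $\pi_1\rest_{U_m}$ and $\pi_3\rest_{U_m}$ share a constituent, so $\Hom_{U_m}(\pi_1,\pi_3)\ne0$ and $\dist_{\pi_1,\pi_3}\le m$, which is the ultrametric inequality, while the common $k$ and $d_\sigma$ force equality of the three normalized dimensions.

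For part (3), the ``if'' direction is immediate: an unramified character $\chi$ is trivial on $\OO_D^*=U_0$, so $\pi_1\rest_{U_0}\simeq(\pi_1\otimes\chi)\rest_{U_0}$ and $\dist_{\pi_1,\pi_1\otimes\chi}=0$. For the converse, suppose $\dist_{\pi_1,\pi_2}=0$, i.e.\ $W:=\Hom_{\OO_D^*}(\pi_1,\pi_2)\ne0$. As $\OO_D^*$ is normal in $D^*$, the action $g\cdot T=\pi_2(g)\,T\,\pi_1(g)^{-1}$ of $D^*$ on $\Hom_{\C}(\pi_1,\pi_2)$ preserves $W$, and a direct check shows that $\OO_D^*$ acts trivially on $W$; thus $W$ is a nonzero finite-dimensional representation of the infinite cyclic group $D^*/\OO_D^*$, a generator of which is the image of some $\varpi_D$ with $v(\varpi_D)=1$. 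The operator by which $\varpi_D$ acts on $W$ is invertible, so it has an eigenvector $T\ne0$ with eigenvalue $\lambda\in\C^*$; unwinding, $T\colon\pi_1\to\pi_2$ is a nonzero $\OO_D^*$-intertwiner with $\pi_2(\varpi_D)T=\lambda\,T\pi_1(\varpi_D)$, and since every element of $D^*$ is $u\varpi_D^{\,j}$ with $u\in\OO_D^*$, this exactly says that $T$ intertwines $\pi_1$ with $\pi_2\otimes\chi^{-1}$, where $\chi$ is the unramified character with $\chi(\varpi_D)=\lambda$. By Schur $T$ is an isomorphism, so $\pi_2\simeq\pi_1\otimes\chi$.

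Almost everything here is bookkeeping once the Clifford decomposition $\pi\rest_{U_m}\simeq e\,(\sigma_1\oplus\dots\oplus\sigma_k)$ is in hand, and I do not expect a serious obstacle; the one point I would be careful about is that $D^*$ is \emph{not} compact, so one must confirm that Clifford's theorem still yields the ``single orbit, uniform multiplicity'' picture for $\pi\rest_{U_m}$ (it does, because $U_m$ is compact and $D^*/U_m$ is a finite extension of $\Z$), and in part (3) the one genuinely non-formal step — producing an eigenvector for the action of $\varpi_D$ on $W$ — relies squarely on $D^*/\OO_D^*$ being infinite cyclic.
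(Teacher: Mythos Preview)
Your argument is correct and is exactly the Clifford-theory computation the paper invokes without giving details; the paper offers no proof beyond the sentence ``immediate consequence of Clifford theory,'' and you have supplied that consequence carefully, including the monotonicity of $\Hom_{U_m}(\pi_1,\pi_2)\ne0$ in $m$, the single-orbit decomposition, and the eigenvector argument for part (3) using $D^*/\OO_D^*\simeq\Z$. One small remark: your computation actually gives $\dim\Hom_{U_m}(\pi_1,\pi_2)/(d_{\pi_1}d_{\pi_2})=e_1e_2k/(e_1e_2k^2d_\sigma^2)=1/(kd_\sigma^2)$ rather than the $k/d_\sigma^2$ printed in the statement, which appears to be a typo in the paper; this is immaterial, since only the fact that the ratio depends solely on the orbit data $(k,d_\sigma)$ is ever used (in part (2) and in the inequality \eqref{eq: ineq}).
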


Let
\begin{gather*}
\inv_{\pi_1,\pi_2}=\int_{D^*}\max(v(g-1),0)\chi_{\pi_1}(g)\chi_{\pi_2}(g^{-1})\ d^*g
=\sum_{m=1}^\infty\int_{U_m}\chi_{\pi_1}(g)\chi_{\pi_2}(g^{-1})\ d^*g
\\=\sum_{m=1}^\infty\vol(U_m)\dim\Hom_{U_m}(\pi_1,\pi_2)=
\sum_{m=\max(1,\dist_{\pi_1,\pi_2})}^\infty\vol(U_m)\dim\Hom_{U_m}(\pi_1,\pi_2).
\end{gather*}
Clearly, $\inv_{\pi_1,\pi_2}=\inv_{\pi_2,\pi_1}$.
It is also clear that $\inv_{\pi_1,\pi_2}$ is unchanged if we twist $\pi_1$ or $\pi_2$ by an unramified character.
For simplicity we write $\inv_\pi=\inv_{\pi,\pi}$.

It follows from Lemma \ref{lem: cliff} that for any $\pi_1,\pi_2,\pi_3\in\widehat{D^*}$ we have
\begin{equation} \label{eq: ineq}
\frac{\inv_{\pi_1,\pi_3}}{d_{\pi_1}d_{\pi_3}}\ge\min(\frac{\inv_{\pi_1,\pi_2}}{d_{\pi_1}d_{\pi_2}},\frac{\inv_{\pi_2,\pi_3}}{d_{\pi_2}d_{\pi_3}}).
\end{equation}
(Note that the case where $\dist_{\pi_1,\pi_2}=0$ or $\dist_{\pi_2,\pi_3}=0$ is examined separately, but it is easy.)

Applying this inequality with $\pi_3=\pi_1$ we get
\[
\frac{\inv_{\pi_1}}{d_{\pi_1}^2}\ge\frac{\inv_{\pi_1,\pi_2}}{d_{\pi_1}d_{\pi_2}},
\]
and by symmetry
\[
\min(\frac{\inv_{\pi_1}}{d_{\pi_1}^2},\frac{\inv_{\pi_2}}{d_{\pi_2}^2})\ge\frac{\inv_{\pi_1,\pi_2}}{d_{\pi_1}d_{\pi_2}}.
\]
\section{A result of Olshanski}

Let $\pi_1,\pi_2\in\widehat{D^*}$.
Let $P$ (resp., $\bar P$) be the parabolic subgroup of $\GL_2(D)$ consisting of upper (resp., lower) triangular matrices.
Consider the family of induced representations
\[
I_P(s)=I_P(\pi_1\otimes\pi_2,s),\ \  I_{\bar P}(s)=I_{\bar P}(\pi_1\otimes\pi_2,s)
\]
on $\GL_2(D)$ parabolically induced from $\pi_1\abs{\Nrd}^{s/2}\otimes\pi_2\abs{\Nrd}^{-s/2}$
(resp., $\pi_1\abs{\Nrd}^{-s/2}\otimes\pi_2\abs{\Nrd}^{s/2}$). In both cases the induction is normalized.
Consider the intertwining operators
\begin{gather*}
M_P(s)=M_{P,\pi_1,\pi_2}(s):I_P(\pi_1\otimes\pi_2,s)\rightarrow I_{\bar P}(\pi_1\otimes\pi_2,-s)\\
M_{\bar P}(s)=I_{\bar P}(\pi_1\otimes\pi_2,s)\rightarrow I_P(\pi_1\otimes\pi_2,-s)
\end{gather*}
on $\GL_2(D)$ given by
\[
M_P(s)\varphi(g)=\int_D\varphi(\sm 1{}{x}1g)\ d^+x,\ \ M_{\bar P}(s)\varphi(g)=\int_D\varphi(\sm 1{x}{}1g)\ d^+x
\]
where we take the Haar measure $d^+x=\abs{\Nrd x}^{\rk}d^*x$ on $D$ (so that $\vol(\OO_D^*)=1$ with respect to this measure).
The integrals converge for $\Re s>0$ and admit meromorphic continuation to $\C$ as rational functions in $q^{-s}$.
If $\pi_1=\pi_2=\pi$ we will also write $M_P(\pi,s)$ and $M_{\bar P}(\pi,s)$.

For any $\pi\in\widehat{D^*}$ denote by $\tor=\tor(\pi)$ the size of the (cyclic) group of unramified characters of $D^*$
such that $\pi\otimes\omega\simeq\pi$. Clearly, $\tor\mid \rk$.

In order to analyze the poles of $M_P(s)$, we have to examine separately the case where $\dist_{\pi_1,\pi_2}>0$
and $\dist_{\pi_1,\pi_2}=0$. The latter reduced to the case $\pi_2=\pi_1$ upon twisting $\pi_1$ by an unramified character.
The following basic result is due to Olshanski.

\begin{proposition} \cite{MR0499010} \label{prop: olsh}
If $\dist_{\pi_1,\pi_2}>0$ then $M_P(s)$ is entire.
If $\pi_1=\pi_2=\pi$ then $(1-q^{-s\tor})M_P(s)$ is entire and
\[
\lim_{s\rightarrow 0}(1-q^{-s})M_P(s)=d_\pi^{-1}\cdot A_P
\]
where $A_P:I_P(0)\rightarrow I_{\bar P}(0)$ is the intertwining operator given by
\[
A_P\varphi(g)=S(\varphi(w_0^{-1}g))
\]
where $w_0=\sm{}{-1}{1}{}$ and $S:\pi\otimes\pi\rightarrow\pi\otimes\pi$ takes $u\otimes v$ to $v\otimes u$.
\end{proposition}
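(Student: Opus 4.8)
The plan is to compute $M_P(s)$ directly on sections, using the Iwasawa decomposition $\GL_2(D)=PK$ with $K=\GL_2(\OO)$ together with the Bruhat-type identity
\[
\sm1{}x1=\sm1{x^{-1}}{}1\;\sm{x^{-1}}{}{}x\;w_0\;\sm1{x^{-1}}{}1 ,\qquad x\in D^*,\ v(x)<0,
\]
which turns the intertwining integral into a geometric series in $q^{-s}$. Since the restriction of $I_P(\pi_1\otimes\pi_2,s)$ to $K$ is independent of $s$, I would fix $\varphi$ in this $K$-module, let $\varphi_s$ denote the section with $\varphi_s\rest_K=\varphi$, split $\int_D=\int_\OO+\int_{v(x)\le-1}$, use $d^+x=\abs{\Nrd x}^{\rk}d^*x$ and the left $P$-equivariance of $\varphi_s$, and substitute $y=x^{-1}$; this yields, for $k\in K$,
\[
M_P(s)\varphi_s(k)=\int_\OO\varphi\bigl(\sm1{}x1k\bigr)\,d^+x+\sum_{m\ge1}q^{-ms}\int_{v(y)=m}\bigl(\pi_1(y)\otimes\pi_2(y^{-1})\bigr)\varphi\bigl(w_0\sm1y{}1k\bigr)\,d^*y ,
\]
where the first term is an $s$-independent, locally constant, vector-valued function on $K$, and the series converges for $\Re s>0$.

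I would then use smoothness. Pick $M_0$ so that $\varphi$ is right-invariant under the principal congruence subgroup $K_{M_0}$ of $K$ of level $M_0$ (normal in $K$, with $\sm1y{}1\in K_{M_0}$ iff $v(y)\ge M_0$); then for $v(y)=m\ge M_0$ we get $\varphi(w_0\sm1y{}1k)=\varphi(w_0k)$, so, writing $y=\varpi^m u$ with $\varpi$ a uniformizer of $D$ and $u\in\OO^*$, the $m$-th coefficient above equals $C_m\varphi(w_0k)$ with
\[
C_m=(\pi_1(\varpi^m)\otimes1)\,\widetilde E\,(1\otimes\pi_2(\varpi^{-m})) ,\qquad \widetilde E=\int_{\OO^*}\pi_1(u)\otimes\pi_2(u^{-1})\,d^*u .
\]
Schur orthogonality on the compact group $\OO^*$ shows $\widetilde E\neq0$ exactly when $\pi_1\rest_{\OO^*}$ and $\pi_2\rest_{\OO^*}$ share an irreducible constituent, i.e.\ (Lemma~\ref{lem: cliff}(3)) exactly when $\dist_{\pi_1,\pi_2}=0$. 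Thus, if $\dist_{\pi_1,\pi_2}>0$ then $\widetilde E=0$, the series terminates at $m=M_0-1$, and $M_P(s)\varphi_s(k)$ is a polynomial in $q^{-s}$ for every $\varphi$ and $k$; hence $M_P(s)$ is entire.

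For $\pi_1=\pi_2=\pi$ I would invoke the Clifford theory of $\pi\rest_{\OO^*}$. As $H^2(\Z,\C^*)=0$, we have $\pi\cong\Ind_H^{D^*}\tau$, where $H=\OO^*\langle\varpi^r\rangle$ is the index-$r$ subgroup stabilizing a constituent $\sigma$ of $\pi\rest_{\OO^*}$ and $\tau\rest_{\OO^*}=\sigma$; hence $\pi\rest_{\OO^*}=\bigoplus_{i\in\Z/r}\sigma_i$ is multiplicity free, the $\sigma_i$ being the $r$ distinct $D^*$-conjugates of $\sigma$, each of dimension $d_\pi/r$, cyclically permuted by $\varpi$. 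Evaluating $\widetilde E$ block by block via Schur's relations gives
\[
\widetilde E=\tfrac r{d_\pi}\bigoplus_{i\in\Z/r}S_i ,\qquad
C_m=\tfrac r{d_\pi}\bigoplus_{i\in\Z/r}\bigl(W_{\sigma_i}\otimes W_{\sigma_{i+m}}\longrightarrow W_{\sigma_{i+m}}\otimes W_{\sigma_i}\bigr) ,
\]
where $V$ is the space of $\pi$, $W_{\sigma_i}$ that of $\sigma_i$, the operators live on the blocks $W_{\sigma_i}\otimes W_{\sigma_{(\cdot)}}$ of $V\otimes V$, and $S_i$ and the arrow are the flips; in particular $C_m$ depends on $m$ only modulo $r$, so $\sum_{m\ge M_0}q^{-ms}C_m=(1-q^{-rs})^{-1}\sum_{j=0}^{r-1}q^{-(M_0+j)s}C_{M_0+j}$. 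Moreover the $r$ unramified characters $\omega$ with $\omega(\varpi)^r=1$ are trivial on $H$, so they fix $\pi$; therefore $r\mid\tor$, and since $(1-q^{-rs})\mid(1-q^{-\tor s})$ it follows that $(1-q^{-\tor s})M_P(s)$ is entire.

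It remains to compute the leading term at $s=0$. The polynomial part of the series is annihilated by $1-q^{-s}$ there, so
\[
\lim_{s\to0}(1-q^{-s})M_P(s)\varphi_s(k)=\Bigl(\lim_{s\to0}\tfrac{1-q^{-s}}{1-q^{-rs}}\Bigr)\sum_{j=0}^{r-1}C_{M_0+j}\varphi(w_0k)=\tfrac1r\Bigl(\sum_{j=0}^{r-1}C_j\Bigr)\varphi(w_0k) ,
\]
using periodicity of $C_m$. As $j$ ranges over $\Z/r$ the partial flips $C_j$ reassemble into the full swap $S$ of $V\otimes V$, i.e.\ $\sum_{j=0}^{r-1}C_j=\tfrac r{d_\pi}S$, so the limit equals $d_\pi^{-1}S(\varphi(w_0k))$; and since $w_0^{-1}=-w_0$ and $\pi(-1)$ is a scalar of square $1$, one has $S(\varphi(w_0k))=S(\varphi(w_0^{-1}k))=A_P\varphi(k)$, whence $\lim_{s\to0}(1-q^{-s})M_P(s)=d_\pi^{-1}A_P$. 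The main difficulty I anticipate is the Clifford-theoretic step for $\pi_1=\pi_2$---namely, that $\pi\rest_{\OO^*}$ is multiplicity free with $\varpi$ cyclically permuting its constituents, that $r\mid\tor$, and the attendant blockwise computations of $\widetilde E$ and of $\sum_jC_j$; the reduction to a geometric series, its meromorphic continuation, and the case $\dist_{\pi_1,\pi_2}>0$ should be routine.
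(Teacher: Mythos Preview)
Your argument is correct and follows a genuinely different route from the paper's. The paper works with sections $\varphi_{u,v}$ supported on the big Bruhat cell and invokes a lemma of Rallis to reduce everything to the value $[M_P(s)\varphi_{u,v}](w_0)$; the resulting integral over $D^*$ is then split as a Tate integral over $F^*$ (which supplies the pole) followed by Schur orthogonality on the compact quotient $D^*/F^*$. You instead stay in the compact picture $I_P(s)\rest_K$, use the Bruhat identity for $\sm1{}x1$ to turn $M_P(s)\varphi_s(k)$ into a geometric series in $q^{-s}$, and control the tail via Schur orthogonality on $\OO^*$ together with Clifford theory for $\pi\rest_{\OO^*}$. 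Your approach is more explicit---it exhibits $M_P(s)\varphi_s(k)$ as a polynomial plus $(1-q^{-rs})^{-1}$ times a polynomial, and shows concretely how the residues on the blocks $W_{\sigma_i}\otimes W_{\sigma_{i+j}}$ reassemble into the full flip $S$---whereas the paper's approach is shorter and avoids the block bookkeeping at the cost of importing Rallis's lemma and the analytic continuation of Tate integrals. The Clifford-theoretic facts you flag as the main difficulty (multiplicity-freeness of $\pi\rest_{\OO^*}$, cyclic permutation by $\varpi$, $r\mid\tor$) are indeed routine once one uses $D^*/\OO^*\simeq\Z$ and $H^2(\Z,\C^*)=0$; in fact $r=\tor$, a fact the paper itself quotes later in the proof of Proposition~\ref{prop: mucomp}.
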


Note that $A_{\bar P}A_P$ is the identity.

\begin{proof}
For completeness, and since we will use a similar computation below, we will provide some details following the argument of Shahidi in
\cite{MR1748913}*{Proposition 5.1}.

For any $u\in\pi_1$, $v\in\pi_2$ and a Schwartz function $\Phi$ on $D$ let $\varphi_{u,v}\in I_P(s)$ be the section
supported in the big cell $P\bar P$ and given there by
\begin{equation} \label{eq: phi}
\varphi_{u,v}(\sm{g_1}{*}{}{g_2}\sm{1}{}{x}{1})=
\Phi(x)\abs{\frac{\Nrd g_1}{\Nrd g_2}}^{(s+\rk)/2}(\pi_1(g_1)u\otimes\pi_2(g_2)v),\ \ g_1,g_2\in D^*,\ x\in D.
\end{equation}

By a Lemma of Rallis \cite{MR1159430}*{Lemma 4.1}, it is enough to show that $[M_P(s)\varphi_{u,v}](w_0)$ is holomorphic at $s_0$
unless $\pi_2\simeq\pi_1\abs{\Nrd}^{s_0}$ and that if $\pi_2=\pi_1$, then $[M_P(s)\varphi_{u,v}](w_0)$ has a simple pole at $s=0$
with
\[
\lim_{s\rightarrow 0}(1-q^{-s})[M_P(s)\varphi_{u,v}](w_0)=d_\pi^{-1}\cdot [A_P\varphi_{u,v}](w_0)=d_\pi^{-1}\Phi(0)(v\otimes u).
\]

Using the relation
\[
\sm{1}{}{-x}{1}w_0=\sm{x^{-1}}{-1}{}{x}\sm{1}{}{x^{-1}}1,\ \ \ x\in D^*
\]
we write
\begin{gather*}
[M_P(s)\varphi_{u,v}](w_0)=\int_D\Phi(x^{-1})\abs{\Nrd(x)}^{-s-\rk}(\pi_1(x^{-1})u\otimes\pi_2(x)v)\ d^+x
\\=\int_{D^*}\Phi(x)\abs{\Nrd(x)}^s(\pi_1(x)u\otimes\pi_2(x^{-1})v)\ d^*x.
\end{gather*}
Hence, for any $u'\in\pi_1$, $v'\in\pi_2$
\begin{gather*}
([M_P(s)\varphi_{u,v}](w_0),u'\otimes v')=\int_{D^*}\Phi(x)\abs{\Nrd(x)}^s(\pi_1(x)u,u')(\pi_2(x^{-1})v,v')\ d^*x
\\=\int_{D^*/F^*}\int_{F^*}\omega_{\pi_1}\omega_{\pi_2}^{-1}(t)\abs{t}^{\rk s}\Phi(tx)\ d^*t\ \abs{\Nrd(x)}^s(\pi_1(x)u,u')(\pi_2(x^{-1})v,v')\ d^*x.
\end{gather*}
On $F^*$ we take the Haar measure normalized by $\vol(\OO_F^*)=1$ and we take the quotient measure on $D^*/F^*$.
The inner integral is a Tate integral (at $\rk s)$, whereas $D^*/F^*$ is compact and $\vol(D^*/F^*)=[D^*:\OO_D^*F^*]=\rk$. We have
\[
\lim_{s\rightarrow 0}(1-q^{-s})\int_{F^*}\omega_{\pi_1}\omega_{\pi_2}^{-1}(t)\abs{t}^s\Phi(tx)\ d^*t=\begin{cases}\Phi(0)&\text{if }\omega_{\pi_1}=\omega_{\pi_2},\\
0&\text{otherwise.}\end{cases}
\]
Thus, by the Schur orthogonality relations,
\[
\lim_{s\rightarrow 0}(1-q^{-s})[M_P(s)\varphi_{u,v}](w_0)=0
\]
unless $\pi_2\simeq\pi_1$ while if $\pi_1=\pi_2=\pi$, then
\[
\lim_{s\rightarrow 0}(1-q^{-s})([M_P(s)\varphi_{u,v}](w_0),u'\otimes v')=\Phi(0)d_\pi^{-1}(u,v')(v,u').
%=d_\pi^{-1}(A_P\varphi_{u,v}(w_0),u'\otimes v').
\]
The proposition follows.
\end{proof}

\section{Plancherel measure} \label{sec: pl}
The composition $M_{\bar P}(-s)M_P(s):I_P(\pi_1\otimes\pi_2,s)\rightarrow I_P(\pi_1\otimes\pi_2,s)$ is a scalar $\mu_{\pi_1,\pi_2}(s)^{-1}$, depending on $s$.
For simplicity we write $\mu_{\pi}(s)=\mu_{\pi,\pi}(s)$.

We now relate $\mu_{\pi_1,\pi_2}(s)$ to the objects introduced in section \ref{sec: dist}.
Once again, the case $\dist_{\pi_1,\pi_2}=0$ reduces to the case $\pi_2=\pi_1$.

\begin{proposition} \label{prop: mucomp}
Let $\pi_1,\pi_2\in\widehat{D^*}$.
\begin{enumerate}
\item If $\dist_{\pi_1,\pi_2}>0$ then
\[
\mu_{\pi_1,\pi_2}(s)^{-1}=\frac{\inv_{\pi_1,\pi_2}}{d_{\pi_1}d_{\pi_2}}.
\]
\item If $\pi_1=\pi_2=\pi$ then
\[
\mu_{\pi}(s)^{-1}=\frac{\inv_\pi+\un_\pi(s)}{d_\pi^2}
\]
where
\[
\un_\pi(s)=\frac{\tor^2}{(1-q^{\tor s})(1-q^{-\tor s})}.
\]
\end{enumerate}
\end{proposition}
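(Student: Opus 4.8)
The plan is to compute the composition $M_{\bar P}(-s)M_P(s)$ as a scalar by evaluating it on a convenient family of sections and extracting the proportionality constant, in the same spirit as the proof of Proposition~\ref{prop: olsh}. Concretely, I would again work with the sections $\varphi_{u,v}$ supported on the big cell, as in \eqref{eq: phi}, since the value at $w_0$ together with Rallis's lemma \cite{MR1159430}*{Lemma 4.1} detects the scalar. The first step is to write down $M_P(s)\varphi_{u,v}$ explicitly: by the computation already carried out in the proof of Proposition~\ref{prop: olsh}, for $u'\in\pi_1$, $v'\in\pi_2$ one has
\[
([M_P(s)\varphi_{u,v}](w_0),u'\otimes v')=\int_{D^*}\Phi(x)\abs{\Nrd(x)}^s(\pi_1(x)u,u')(\pi_2(x^{-1})v,v')\ d^*x.
\]
The second step is to iterate: apply $M_{\bar P}(-s)$ to this section and evaluate again at $w_0$, using the analogous relation $\sm{1}{-x}{}{1}w_0=\sm{x}{}{}{x^{-1}}w_0\sm1{}{x^{-1}}1$ for the lower-triangular unipotent. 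Because $M_{\bar P}(-s)M_P(s)$ is a scalar, it suffices to compute one matrix coefficient of the resulting operator against $u'\otimes v'$ and compare it with the corresponding coefficient of the identity, i.e. with $(u,u')(v,v')$; the ratio (independent of $u,v,u',v',\Phi$) is $\mu_{\pi_1,\pi_2}(s)^{-1}$.

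The key technical input is the same as in the previous proof: the product of the two unipotent integrations unfolds, after a change of variables, into an integral over $D^*$ of a matrix coefficient of $\pi_1$ against a matrix coefficient of $\pi_2$, twisted by a power of $\abs{\Nrd\cdot}$; and this integral is then organized via the decomposition $D^*=\bigsqcup_{m}(U_m\setminus U_{m+1})\sqcup(\text{part supported on }\prm^0\text{-units})$, so that Schur orthogonality on each $U_m$ converts $\int_{U_m}\chi_{\pi_1}(g)\chi_{\pi_2}(g^{-1})\,d^*g$ into $\vol(U_m)\dim\Hom_{U_m}(\pi_1,\pi_2)$. Summing these, in the case $\dist_{\pi_1,\pi_2}>0$ one recognizes exactly the series defining $\inv_{\pi_1,\pi_2}$ in \S\ref{sec: dist}, divided by $d_{\pi_1}d_{\pi_2}$ coming from the normalization of matrix coefficients; this gives part~(1), and one checks along the way that the contribution of the region where $v(x)=0$ vanishes precisely because $\dist_{\pi_1,\pi_2}>0$ (so $\pi_1\rest_{\OO_D^*}$ and $\pi_2\rest_{\OO_D^*}$ share no constituent, killing that piece by Schur). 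When $\pi_1=\pi_2=\pi$, the region $v(x)=0$ no longer vanishes: there the integral picks up a Tate-type factor over the unramified directions, namely the contribution of characters $\omega$ with $\pi\otimes\omega\simeq\pi$, which form a cyclic group of order $\tor$, and summing the geometric series in $q^{\tor s}$ and $q^{-\tor s}$ produces exactly $\un_\pi(s)=\tor^2/((1-q^{\tor s})(1-q^{-\tor s}))$; adding this to $\inv_\pi$ and dividing by $d_\pi^2$ yields part~(2).

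The main obstacle I anticipate is bookkeeping the normalizations so that the three separate pieces — the ``inner'' matrix-coefficient integral, the outer $D^*/F^*$ versus $F^*$ decomposition, and the measure normalizations $\vol(\OO_D^*)=1$ for both $d^*x$ and $d^+x=\abs{\Nrd x}^{\rk}d^*x$ — line up to give clean constants with no stray powers of $q$ or of $\rk$; in particular one must be careful that the $d_\pi$'s from Schur orthogonality combine correctly with the $d_\pi^{-1}$ appearing in Proposition~\ref{prop: olsh}. A secondary subtlety is the meromorphic continuation: the defining integrals converge only for $\Re s>0$, so the identity $\mu_{\pi_1,\pi_2}(s)^{-1}=\inv_{\pi_1,\pi_2}/(d_{\pi_1}d_{\pi_2})$ must first be established there and then extended by rationality in $q^{-s}$ — which is immediate once both sides are seen to be rational, the left by general theory and the right because in fact $\inv_{\pi_1,\pi_2}$ is a \emph{constant} (the $s$-dependence dropped out, which is itself the content worth remarking on). In the self-dual case the analogous continuation argument must instead track the rational function $\un_\pi(s)$, whose poles at $q^{\tor s}=1$ match the pole of $M_P(\pi,s)$ already identified in Proposition~\ref{prop: olsh}, providing a useful consistency check on the computation.
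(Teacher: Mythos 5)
Your overall skeleton coincides with the paper's: big-cell sections $\varphi_{u,v}$ as in \eqref{eq: phi}, iteration of the two unipotent integrals, and Schur orthogonality along the congruence filtration converting character integrals into $\vol(U_m)\dim\Hom_{U_m}(\pi_1,\pi_2)$, with the unramified self-twists of $\pi$ producing the extra rational term $\un_\pi(s)$ when $\pi_1=\pi_2$. However, the step by which you extract the scalar would fail as stated. You propose to evaluate $M_{\bar P}(-s)M_P(s)\varphi_{u,v}$ at $w_0$ and compare with $(u,u')(v,v')$. Since $\varphi_{u,v}$ is supported in the big cell $P\bar P$ and $w_0\notin P\bar P$ (its lower-right entry is $0$, while every element of $P\bar P$ has invertible lower-right entry), the composition, being $\mu_{\pi_1,\pi_2}(s)^{-1}\varphi_{u,v}$, vanishes at $w_0$; your comparison reads $0=0$ and yields nothing. (Also, the identity $\sm{1}{-x}{}{1}w_0=\sm{x}{}{}{x^{-1}}w_0\sm{1}{}{x^{-1}}{1}$ is false already over a commutative field, and Rallis's lemma is a device for detecting poles, not for computing the scalar.) The correct move, and what the paper does, is to compute $[M_P(s)\varphi_{u,v}]$ at the points $\sm{1}{h}{}{1}$ of the \emph{opposite} unipotent, via $\sm{1}{}{x}{1}\sm{1}{y}{}{1}=\sm{y(1+xy)^{-1}y^{-1}}{y}{}{1+xy}\sm{1}{}{(1+xy)^{-1}x}{1}$, and then integrate over $h$; this evaluates the composition at $e$, where $\varphi_{u,v}(e)=\Phi(0)(u\otimes v)$ is nonzero for $\Phi=\one_{\OO}$.

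Two further points in your sketch need repair. The convergence structure is the opposite of what you write: $M_P(s)$ converges for $\Re s>0$ while $M_{\bar P}(-s)$ converges for $\Re s<0$, so the iterated integral cannot be ``first established for $\Re s>0$ and then continued''. In the paper the inner integral $A(\Phi_h,s)=\int_{D^*}\chi_{\pi_1}(g)\chi_{\pi_2}(g^{-1})\Phi(1-g)\abs{\Nrd g}^s\,d^*g$ is continued from $\Re s>0$, the outer integral over $h$ converges for $\Re s<0$, and the identification of $\int_{D^*}A(\Phi_h,s)\,d^*h$ with $\mu_{\pi_1,\pi_2}(s)^{-1}d_{\pi_1}d_{\pi_2}\Phi(0)$ is clinched by the observation that, as a distribution in $\Phi$, the iterated integral is invariant under $\Phi\mapsto\Phi_h$ and hence proportional to $\Phi(0)$. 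Finally, the character computations need more than vanishing on the unit shell: for part (1) one needs $\int_{v(g)=k}\chi_{\pi_1}(g)\chi_{\pi_2}(g^{-1})\,d^*g=0$ for \emph{every} $k\in\Z$ (true because the projection onto the $\OO_D^*$-invariants of $\pi_1\otimes\pi_2^\vee$ already vanishes when $\dist_{\pi_1,\pi_2}>0$), and for part (2) one needs the exact values $\int_{v(g)=k}\abs{\chi_\pi(g)}^2\,d^*g=\tor$ if $\tor\mid k$ and $0$ otherwise (using that $\pi\rest_{\OO_D^*}$ is multiplicity free of length $\tor$); the factor $\tor^2$ in $\un_\pi(s)$ then arises from a double geometric series, one over the inner $g$-shells and one over the outer $h$-shells, not merely from the region $v(x)=0$.
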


Note that the proposition implies that
\[
\lim_{s\rightarrow 0}(1-q^{-s})^{-1}(1-q^s)^{-1}\mu_{\pi}(s)=d_\pi^2,
\]
in accordance with Proposition \ref{prop: olsh}.

\begin{proof}
Fix a Schwartz function $\Phi$ on $D$ and let $\varphi_{u,v}$ be as in \eqref{eq: phi}.
Using the relation
\[
\sm{1}{}{x}{1}\sm{1}{y}{}{1}=\sm{y(1+xy)^{-1}y^{-1}}{y}{}{1+xy}\sm{1}{}{(1+xy)^{-1}x}{1},\ \ x\in D, y\in D^*, x\ne y^{-1}
\]
we write $[M_P(s)\varphi_{u,v}]\sm{1}{y}{}{1}$ as
\begin{gather*}
\int_D\Phi((1+xy)^{-1}x)\abs{\Nrd(1+xy)}^{-s-\rk}(\pi_1(y(1+xy)^{-1}y^{-1})u\otimes\pi_2(1+xy)v)\ d^+x
\\=\abs{\Nrd y}^{-\rk}\int_{D^*}\Phi((1-x)y^{-1})\abs{\Nrd x}^s(\pi_1(yxy^{-1})u\otimes \pi_2(x)^{-1}v)\ d^*x.
\end{gather*}
Thus, fixing an orthonormal basis $u_i$ of $\pi_1$ and $v_j$ of $\pi_2$, we have
%We take the Haar measures on $D$ and $D^*$ so that $\vol(\OO_D^*)=1$.
%A straightforward calculation gives
\[
\sum_{i,j}\big(M_P(s)\varphi_{u_i.v_j}(\sm{1}{h}{}{1}),u_i\otimes v_j\big)=
\abs{\Nrd h}^{-\rk}A(\Phi_h,s)
\]
where
\[
A(\Phi,s)=\int_{D^*}\chi_{\pi_1}(g)\chi_{\pi_2}(g^{-1})\Phi(1-g)\abs{\Nrd g}^s\ d^*g
\]
and $\Phi_h(x)=\Phi(xh^{-1})$. We view $A(\cdot,s)$ as a distribution on $D$ that converges for $\Re s>0$ and
admits meromorphic continuation. Thus,
\[
\sum_{i,j}(M_{\bar P}(-s)M_P(s)\varphi_{u_i.v_j},u_i\otimes v_j)=\int_{D^*}A(\Phi_h,s)\ d^*h
\]
which converges for $\Re s<0$.
As a distribution on $D$, the right-hand side is invariant under $\Phi\mapsto\Phi_h$ for any $h\in D^*$.
Hence, it must be proportional to $\Phi(0)$.
The constant of proportionality is $\mu_{\pi_1,\pi_2}(s)^{-1}d_{\pi_1}d_{\pi_2}$ since
$\sum_{i,j}\big(\varphi_{u_i.v_j}(e),u_i\otimes v_j\big)=d_{\pi_1}d_{\pi_2}\Phi(0)$.

Now take $\Phi$ to be the characteristic function of $\OO$. Let $h\in D^*$. For $\Re s>0$ we have
\[
A(\Phi_h,s)=\begin{cases}
\int_{1+h\OO_D}\chi_{\pi_1}(g)\chi_{\pi_2}(g^{-1})\ d^*g&\text{if }h\in\prm,\\
\int_{\abs{\Nrd(g)}\le\abs{\Nrd{h}}}\chi_{\pi_1}(g)\chi_{\pi_2}(g^{-1})\abs{\Nrd g}^s\ d^*g&\text{otherwise.}
\end{cases}
\]
Assume first that $\dist_{\pi_1,\pi_2}>0$. Then
\[
\int_{v(g)=k}\chi_{\pi_1}(g)\chi_{\pi_2}(g^{-1})\ d^*g=0
\]
for all $k\in\Z$. Hence, letting $m=v(h)$ we have
\[
A(\Phi_h,s)=\begin{cases}
\vol(U_m)\cdot\dim\Hom_{U_m}(\pi_1,\pi_2)&\text{if }h\in\prm\text{ (i.e., $m>0$)},\\
0&\text{otherwise.}
\end{cases}
\]
It follows that for $\Re s<0$ we have
\begin{gather*}
\mu_{\pi_1,\pi_2}^{-1}(s)d_{\pi_1}d_{\pi_2}=\int_{D^*}A(\Phi_h,s)\ d^*h=\int_{\abs{h}<1}A(\Phi_h,s)\ d^*h
\\=\sum_{m=1}^\infty\vol(U_m)\dim\Hom_{U_m}(\pi_1,\pi_2)=\inv_{\pi_1,\pi_2}
%=\int_{D^*}\max(v(g-1),0)\chi_{\pi_1}(g)\chi_{\pi_2}(g^{-1})\ d^*g
\end{gather*}
as claimed.

Assume now that $\pi_2=\pi_1=\pi$.
The restriction of $\pi$ to $\OO_D^*$ is multiplicity free of length $\tor$.
Moreover, for any integer $k$, $\chi_\pi$ vanishes on $\{g\in D^*:v(g)=k\}$ if and only if $k$ is not divisible by $\tor$.
Thus,
\[
\int_{v(g)=k}\abs{\chi_\pi(g)}^2\ d^*g=\begin{cases}\tor&\text{if }\tor\mid k,\\0&\text{otherwise.}\end{cases}
\]
Hence, letting $m=v(h)$ we have
\[
A(\Phi_h,s)=\begin{cases}
\vol(U_m)\dim\Hom_{U_m}(\pi,\pi)&\text{if }h\in\prm\text{ (i.e., $m>0$)},\\
\tor\frac{q^{\tor\lfloor\frac{-m}{\tor}\rfloor s}}{1-q^{-\tor s}}&\text{otherwise.}
\end{cases}
\]
It follows that for $\Re s<0$ we can write
\[
\mu_\pi(s)^{-1}d_{\pi}^2=\int_{D^*}A(\Phi_h,s)\ d^*h=
\int_{\abs{h}<1}A(\Phi_h,s)\ d^*h+\int_{\abs{h}\ge1}A(\Phi_h,s)\ d^*h
%\inv_{\pi_1,\pi_2}+\un_{\pi_1,\pi_2}(s)
\]
where
\begin{gather*}
\int_{\abs{h}<1}A(\Phi_h,s)\ d^*h=
\sum_{m=1}^\infty\vol(U_m)\dim\Hom_{U_m}(\pi,\pi)=\inv_\pi
%\\=\int_{D^*}\max(v(g-1),0)\chi_{\pi_1}(g)\chi_{\pi_2}(g^{-1})\ d^*g
\end{gather*}
and
\[
\int_{\abs{h}\ge1}A(\Phi_h,s)\ d^*h=
\tor^2\sum_{m\ge0}\frac{q^{\tor ms}}{1-q^{-\tor s}}=
\frac{\tor^2}{(1-q^{\tor s})(1-q^{-\tor s})}
\]
as claimed.
\end{proof}

We specialize to the case $\pi_1=\pi_2=\pi$.

\begin{corollary} \label{cor: muiota}
Let $\pi\in\widehat{D^*}$. The representation $I_P(\pi,s)$, $s\in\R_{\ge0}$ is reducible at a unique point
$s=\lsg(\pi)$.\footnote{Of course, the uniqueness also follows from a general result on reductive groups.}
Moreover, $\lsg=\lsg(\pi)$ satisfies the following equivalent equations
\begin{subequations}
\begin{gather} \label{eq: bizar}
\inv_\pi=\tor^2\frac{q^{-\lsg\tor}}{(1-q^{-\lsg\tor})^2},\\
\label{eq: qr}
q^{\lsg\tor}+q^{-\lsg\tor}=2+\frac{\tor^2}{\inv_\pi},\\
%1-\frac{2\tor}{\tor+\sqrt{\tor^2+4\inv_\pi}}
\label{eq: plnch}
\mu_{\pi}^{-1}(s)d_\pi^2=\frac{\tor^2}{(1-q^{-\lsg\tor})^2}\frac{(1-q^{-\tor(s+\lsg)})(1-q^{\tor(s-\lsg)})}{(1-q^{\tor s})(1-q^{-\tor s})}.
\end{gather}
\end{subequations}
\end{corollary}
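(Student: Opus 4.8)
The plan is to extract everything from Proposition~\ref{prop: mucomp}(2) by viewing $\mu_\pi(s)^{-1}$ as a rational function in $z:=q^{\tor s}$. Since $\un_\pi(s)=\frac{\tor^2}{(1-z)(1-z^{-1})}=-\frac{\tor^2 z}{(1-z)^2}$, that proposition gives
\[
\mu_\pi(s)^{-1}d_\pi^2=\inv_\pi-\frac{\tor^2 z}{(1-z)^2}=\frac{\inv_\pi}{(1-z)^2}\,Q(z),\qquad Q(z):=z^2-\Bigl(2+\frac{\tor^2}{\inv_\pi}\Bigr)z+1.
\]
The quantity $\inv_\pi$ is finite (the defining series is dominated by $d_\pi^2\sum_{m\ge1}\vol U_m$) and strictly positive, because $U_m$ acts trivially on $\pi$ for all large $m$, so $\dim\Hom_{U_m}(\pi,\pi)=d_\pi^2$ for those $m$ and the corresponding terms are positive. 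The polynomial $Q$ is palindromic, so its two roots are reciprocal; their product is $1$ and their sum is $2+\tor^2/\inv_\pi>2$, hence they are positive reals $z_0>1>z_0^{-1}$. I then \emph{define} $\lsg=\lsg(\pi):=\frac{\log z_0}{\tor\log q}>0$, equivalently $q^{\lsg\tor}=z_0$, so that $Q(z)=(z-q^{\lsg\tor})(z-q^{-\lsg\tor})$.

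Next I verify that \eqref{eq: bizar}, \eqref{eq: qr} and \eqref{eq: plnch} are three reformulations of ``$q^{\pm\lsg\tor}$ are the roots of $Q$''. Indeed, \eqref{eq: qr} is precisely $Q(q^{\lsg\tor})=0$ divided by $q^{\lsg\tor}$; clearing denominators in \eqref{eq: bizar} and dividing by $q^{-\lsg\tor}$ turns it into $\inv_\pi(q^{\lsg\tor}-2+q^{-\lsg\tor})=\tor^2$, which is \eqref{eq: qr} again, so \eqref{eq: bizar} and \eqref{eq: qr} are equivalent. For \eqref{eq: plnch} I substitute $Q(z)=(z-q^{\lsg\tor})(z-q^{-\lsg\tor})$ into the displayed formula above and use $q^{-\tor(s+\lsg)}=z^{-1}q^{-\lsg\tor}$, $q^{\tor(s-\lsg)}=zq^{-\lsg\tor}$ and $(1-q^{\tor s})(1-q^{-\tor s})=-(1-z)^2/z$ together with \eqref{eq: bizar}; a short computation then recovers exactly the right-hand side of \eqref{eq: plnch}.

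Finally, the reducibility claim. Here I use the rank-one theory of the intertwining operators $M_P(\pi,s)$, $M_{\bar P}(\pi,s)$ on $\GL_2(D)$ (Harish-Chandra, Silberger), applied to $\pi\otimes\pi$, which is a discrete series representation of the Levi $D^*\times D^*$ ($D^*$ being compact modulo its centre) fixed by the non-trivial Weyl element $w_0$. By Proposition~\ref{prop: olsh} the function $(1-q^{-s\tor})M_P(\pi,s)$ is entire, so $M_P(\pi,s)$ and (by the analogous statement) $M_{\bar P}(\pi,s)$ are holomorphic wherever $q^{-s\tor}\ne1$, in particular on $\R_{>0}$; combined with $M_{\bar P}(-s)M_P(s)=\mu_\pi(s)^{-1}\cdot\mathrm{Id}$, the standard theory gives that, for real $s>0$, $I_P(\pi,s)$ is reducible if and only if $\mu_\pi(s)^{-1}=0$ (when the scalar vanishes one of the two holomorphic operators fails to be invertible, producing a proper subrepresentation; when it does not, $M_P(s)$ is an isomorphism and $I_P(\pi,s)$ is irreducible). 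By the formula of the first paragraph the only zero of $\mu_\pi(s)^{-1}$ with $s\in\R_{>0}$ is $s=\lsg(\pi)$, and it is a genuine simple zero since $z_0\ne1$; moreover $I_P(\pi,0)$ is irreducible (the unitary principal series: $\mu_\pi(0)=0$, so the Knapp--Stein $R$-group is trivial). Hence $\lsg(\pi)$ is the unique point of reducibility in $\R_{\ge0}$, and the second step expresses it through the three equations.

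The only ingredient going beyond elementary bookkeeping is the rank-one equivalence ``$I_P(\pi,s)$ is reducible for real $s>0$ if and only if $\mu_\pi(s)^{-1}=0$'' — for which the holomorphy of $M_P(\pi,s)$ on $\R_{>0}$ supplied by Proposition~\ref{prop: olsh} is exactly what is needed — so I regard that as the crux; the manipulation of $Q$ and the substitution $z=q^{\tor s}$ I expect to be routine.
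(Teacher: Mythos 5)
Your proposal is correct and follows essentially the same route as the paper: the equivalence of \eqref{eq: bizar}, \eqref{eq: qr}, \eqref{eq: plnch} is elementary algebra from Proposition \ref{prop: mucomp}(2) (your factorization via the palindromic quadratic in $z=q^{\tor s}$ is just a tidy way of organizing it), and the reducibility statement is the standard rank-one fact that for real $s\ne 0$ reducibility of $I_P(\pi,s)$ is detected by the poles of $\mu_\pi$ (zeros of $\mu_\pi^{-1}$), with irreducibility at $s=0$ from $\mu_\pi(0)=0$. The only cosmetic difference is that you define $\lsg$ as the unique positive solution of \eqref{eq: qr} and then match it with the reducibility point, while the paper defines $\lsg$ as the reducibility point and derives the equations; the content is identical.
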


\begin{proof}
The equivalence of \eqref{eq: bizar} and \eqref{eq: qr} is straightforward, while the equivalence of \eqref{eq: bizar} and \eqref{eq: plnch}
follows from Proposition \ref{prop: mucomp}. The irreducibility of $I_P(\pi,0)$ follows from vanishing of $\mu_{\pi}(s)$ at $s=0$.
On the other hand, for $\Re s\ne0$, the reducibility points $I_P(\pi,s)$ coincide with the poles of $\mu_{\pi}(s)$.
Evidently, the equation \eqref{eq: qr} has a unique solution $\lsg>0$. The corollary follows.
\end{proof}

%(Note that the conductor $f(\pi\otimes\pi^\vee)$ doesn't show up since it shows up in $d_\pi^2$.)

We shall say more about $\lsg$ in the next section.

We now define the conductor $f(\pi_1\times\pi_2^\vee)\in\R$ in terms of $\mu_{\pi_1,\pi_2}(s)$.
Set
\[
v_{\rk}=(1-q^{-\rk})q^{-\binom{\rk}2}.
\]
Suppose first that $\dist_{\pi_1,\pi_2}>0$. Then $f(\pi_1\times\pi_2^\vee)$ is defined by the equations
\begin{subequations}
\begin{gather} \label{eq: finv}
q^{-f(\pi_1\times\pi_2^\vee)}=\frac{v_{\rk}^2\inv_{\pi_1,\pi_2}}{d_{\pi_1}d_{\pi_2}},\\
\label{eq: muf}
\mu_{\pi_1,\pi_2}(s)=v_{\rk}^2q^{f(\pi_1\times\pi_2^\vee)},
\end{gather}
\end{subequations}
which are equivalent by Proposition \ref{prop: mucomp}.

If $\pi_1=\pi_2=\pi$ define $f(\pi\times\pi^\vee)\in\R$ by the equations
\begin{subequations}
\begin{gather} \label{eq: dpinv}
q^{-(f(\pi\times\pi^\vee)+\lsg\tor)}=\frac{v_{\rk}^2\inv_\pi}{d_\pi^2},\\
\label{eq: dpif}
d_{\pi}=v_{\rk}q^{f(\pi\times\pi^\vee)/2}\tor(1-q^{-\lsg\tor})^{-1},\\
\label{eq: mupid}
\mu_\pi(s)=v_{\rk}^2q^{f(\pi\times\pi^\vee)}\cdot\frac{(1-q^{-\tor s})(1-q^{\tor s})}{(1-q^{-\tor (s+\lsg)})(1-q^{-\tor (\lsg-s})}.
\end{gather}
\end{subequations}
Once again, the three equations are equivalent by Corollary \ref{cor: muiota}.

Note that the factor $v_{\rk}$ is introduced so that for $\pi=\one$ the one-dimensional identity representation we have $f(\pi\times\pi^\vee)=\rk^2-\rk$ (by \eqref{eq: dpif}).
However, it is not \emph{a priori} clear that $f(\pi_1\times\pi_2)$ is an integer for all $\pi_1,\pi_2\in\widehat{D^*}$.

In order to write \eqref{eq: finv} and \eqref{eq: dpinv} in a uniform way we introduce
\[
\tilde f(\pi_1\times\pi_2^\vee)=f(\pi_1\times\pi_2^\vee)+\lsg(\pi_1)\tor(\pi_1,\pi_2)
\]
where $\tor(\pi_1,\pi_2)$ is the number of unramified characters $\omega$ such that $\pi_2\simeq\pi_1\otimes\omega$.
Thus, $\tor(\pi_1,\pi_2)=\tor(\pi_1)=\tor(\pi_2)$ if $\dist_{\pi_1,\pi_2}=0$ and $\tor(\pi_1,\pi_2)=0$ otherwise.
We then have
\[
q^{-\tilde f(\pi_1\times\pi_2^\vee)}=\frac{v_{\rk}^2\inv_{\pi_1,\pi_2}}{d_{\pi_1}d_{\pi_2}}.
\]
From \eqref{eq: ineq} we infer:

\begin{proposition} \label{prop: CSdiv}
Let $\pi_1,\pi_2,\pi_3\in\widehat{D^*}$. Then,
\[
\tilde f(\pi_1\times\pi_3^\vee)\le\max(\tilde f(\pi_1\times\pi_2^\vee),\tilde f(\pi_2\times\pi_3^\vee)).
\]
Hence,
%Moreover, if $\dist_{\pi_1,\pi_2}>0$, then
\[
\tilde f(\pi_1\times\pi_2^\vee)\ge\max(\tilde f(\pi_1\times\pi_1^\vee),\tilde f(\pi_2\times\pi_2^\vee)).
\]
\end{proposition}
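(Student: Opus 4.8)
The plan is to deduce both statements purely formally from the ultrametric inequality \eqref{eq: ineq} together with the defining identity
\[
q^{-\tilde f(\pi_1\times\pi_2^\vee)}=\frac{v_{\rk}^2\inv_{\pi_1,\pi_2}}{d_{\pi_1}d_{\pi_2}},
\]
which the excerpt has already recorded for \emph{all} pairs $\pi_1,\pi_2\in\widehat{D^*}$. Since $v_{\rk}=(1-q^{-\rk})q^{-\binom{\rk}2}>0$, multiplying \eqref{eq: ineq} through by $v_{\rk}^2$ gives
\[
q^{-\tilde f(\pi_1\times\pi_3^\vee)}\ge\min\bigl(q^{-\tilde f(\pi_1\times\pi_2^\vee)},q^{-\tilde f(\pi_2\times\pi_3^\vee)}\bigr),
\]
and applying the strictly decreasing function $x\mapsto-\log_q x$ to both sides converts the $\min$ into a $\max$ and yields the asserted sub-ultrametric inequality
\[
\tilde f(\pi_1\times\pi_3^\vee)\le\max\bigl(\tilde f(\pi_1\times\pi_2^\vee),\tilde f(\pi_2\times\pi_3^\vee)\bigr).
\]

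For the ``Hence'' clause I would first establish the symmetry $\tilde f(\pi_1\times\pi_2^\vee)=\tilde f(\pi_2\times\pi_1^\vee)$. By the displayed identity it suffices to see that its right-hand side is symmetric in $\pi_1,\pi_2$: $\inv_{\pi_1,\pi_2}=\inv_{\pi_2,\pi_1}$ was noted in \S\ref{sec: dist}, the factor $d_{\pi_1}d_{\pi_2}$ is manifestly symmetric, and the correction term hidden in $\tilde f$ satisfies $\lsg(\pi_1)\tor(\pi_1,\pi_2)=\lsg(\pi_2)\tor(\pi_2,\pi_1)$, since both sides vanish unless $\dist_{\pi_1,\pi_2}=0$, in which case $\pi_2$ is an unramified twist of $\pi_1$, so $\tor(\pi_1,\pi_2)=\tor(\pi_2,\pi_1)$ trivially and $\lsg(\pi_1)=\lsg(\pi_2)$ because $\lsg$ is determined by $\inv_\pi$ and $\tor$ via \eqref{eq: qr}, both of which are insensitive to unramified twists.

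With the symmetry in hand I would apply the first inequality with $\pi_3=\pi_1$, obtaining
\[
\tilde f(\pi_1\times\pi_1^\vee)\le\max\bigl(\tilde f(\pi_1\times\pi_2^\vee),\tilde f(\pi_2\times\pi_1^\vee)\bigr)=\tilde f(\pi_1\times\pi_2^\vee),
\]
and likewise, interchanging the roles of $\pi_1$ and $\pi_2$, $\tilde f(\pi_2\times\pi_2^\vee)\le\tilde f(\pi_1\times\pi_2^\vee)$; combining these gives $\tilde f(\pi_1\times\pi_2^\vee)\ge\max(\tilde f(\pi_1\times\pi_1^\vee),\tilde f(\pi_2\times\pi_2^\vee))$.

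Because the whole argument is no more than taking $-\log_q$ of an inequality proved earlier, there is essentially no obstacle. The only points deserving a moment's care are the verification of the symmetry of $\tilde f$ in the presence of the term $\lsg(\pi_1)\tor(\pi_1,\pi_2)$, as above, and — if one wishes to be scrupulous — confirming that \eqref{eq: ineq}, derived from Lemma \ref{lem: cliff}, genuinely covers the degenerate cases $\dist_{\pi_1,\pi_2}=0$ or $\dist_{\pi_2,\pi_3}=0$; but these are precisely the cases already flagged as easy in the excerpt, so no further work is needed.
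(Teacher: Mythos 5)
Your proof is correct and is essentially the paper's own argument: the proposition is deduced directly from the inequality \eqref{eq: ineq} via the uniform identity $q^{-\tilde f(\pi_1\times\pi_2^\vee)}=v_{\rk}^2\inv_{\pi_1,\pi_2}/(d_{\pi_1}d_{\pi_2})$, with the ``Hence'' clause obtained by specializing $\pi_3=\pi_1$ and symmetry, exactly as in \S\ref{sec: dist}. Your extra care about the symmetry of $\tilde f$ and the degenerate cases $\dist=0$ is a harmless elaboration of points the paper treats as immediate.
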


\section{Jacquet--Langlands correspondence} \label{sec: JL}

Recall that we characterized the invariant $\lsg=\lsg(\pi)$ by the equivalent equations of Corollary \ref{cor: muiota}.
However, it is not {\it a priori} clear that $\lsg$ is an integer.

To that end we use the Jacquet--Langlands correspondence \cites{MR771672, MR700135, MR1737224} %MR1951441
which is a natural bijection
\[
\JL:\widehat{D^*}\rightarrow\Irr_2\GL_{\rk}(F)
\]
where $\Irr_2\GL_{\rk}(F)$ denotes the set of square-integrable irreducible representations of $\GL_{\rk}(F)$, up to equivalence.
Denote by $\Irr_c\GL_{\rk}(F)\subset\Irr_2\GL_{\rk}(F)$ the subset of cuspidal representations.
Recall that any $\tau\in\Irr_2\GL_{\rk}(F)$ is of the form $\delta(\sigma,k)$ for a divisor $k$ of $\rk$ and
$\sigma\in\Irr_c\GL_{\rk/k}(F)$ (with $k$ and $\sigma$ uniquely determined by $\tau$) where $\delta(\sigma,k)$ denotes the (unique) irreducible subrepresentation
of the representation parabolically induced from $\sigma\abs{\cdot}^{x_1}\otimes\dots\otimes\sigma\abs{\cdot}^{x_k}$
where $x_1+x_k=0$ and $x_{i+1}-x_i=1$ for all $i=1,\dots,k-1$.
We have
\[
\JL(\pi)=\delta(\sigma,\lsg)
\]
for some $\sigma\in\Irr_c\GL_{\rk/\lsg}$ where $\lsg=\lsg(\pi)$.

One can also characterize $\lsg$ intrinsically in terms of $\pi$ without appealing to the Jacquet--Langlands correspondence.
%In fact, using  it was shown in \cite{MR2136524} that
Namely, the length of $\pi\rest_{U_1}$ is $\tor\frac{q^{\rk}-1}{q^{\lsg\tor}-1}$ \cite{MR2136524}.
Note that the proof in [ibid.] uses the classification of $\widehat{D^*}$ \cites{MR1635145, MR1166506}.
We are unaware of a proof of this fact (for \emph{some} integer $\lsg$ dividing $\rk/\tor$)
which does not use the classification of $\widehat{D^*}$.
%\end{remark}
%Moreover, $s=\lsg$ is the unique reducibility point of $I_P(s)$ for $s\in\R_{\ge0}$ \Erez{reference}.

%\begin{remark}
%However, it not clear a priori that $\lsg$ is an integer dividing $\rk/\tor$ let alone that \eqref{eq: rtor} is satisfied.
%Presumably, this can be inferred from classification of $\widehat{D^*}$ \cites{MR1635145, MR1166506}. However, we haven't pursued it.
%\end{remark}

\begin{remark}
The relation \eqref{eq: bizar} is remarkable. The right-hand side involves only the rather crude parameters $\tor$ and $\lsg$
which can be read off from the length of the restriction of $\pi$ to $\OO_D^*$ and to $U_1$.
On the other hand, $\inv_\pi$ ostensibly depends on the decomposition into irreducibles of the restriction of $\pi$ to $U_k$, $k\ge1$.
\end{remark}

\begin{remark}
In the case where the restriction of $\pi$ to $U_1$ is multiplicity free %\Erez{Classify!}
(which is satisfied for instance if $\tor=1$) the identity \eqref{eq: bizar} becomes
\[
\sum_{k\ge1}\int_{U_k}\abs{\chi_\pi(g)}^2\ d^*g=
\left(\int_{\OO_D^*}\abs{\chi_\pi(g)}^2\ d^*g+\int_{U_1}\abs{\chi_\pi(g)}^2\ d^*g\right)
\cdot\int_{U_1}\abs{\chi_\pi(g)}^2\ d^*g
\]
since
\[
\int_{\OO_D^*}\abs{\chi_\pi(g)}^2\ d^*g=\tor\text{ and }
\int_{U_1}\abs{\chi_\pi(g)}^2\ d^*g=\frac{\tor}{q^{\lsg\tor}-1}.
\]
\end{remark}

\begin{lemma} \label{lem: cp}
For any $\pi_1,\pi_2\in\widehat{D^*}$ we have
\[
f(\pi_1\times\pi_2^\vee)=f^{\GL}(\JL(\pi_1)\times\JL(\pi_2^\vee))
\]
where $f^{\GL}$ is the usual Rankin--Selberg conductor for the general linear group.
\end{lemma}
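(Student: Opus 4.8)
The plan is to match the two conductors via their defining Plancherel data, exploiting the compatibility of the Jacquet--Langlands correspondence with parabolic induction on $\GL_2(D)$ versus $\GL_{2\rk}(F)$. First I would recall that the conductor $f^{\GL}$ on the general linear side admits a Plancherel-theoretic description entirely parallel to the one adopted here: if $\tau_1,\tau_2$ are square-integrable representations of $\GL_{m_1}(F)$, $\GL_{m_2}(F)$, the Plancherel measure attached to the Levi $\GL_{m_1}(F)\times\GL_{m_2}(F)$ of $\GL_{m_1+m_2}(F)$ at the inducing datum $\tau_1\abs{\cdot}^{s/2}\otimes\tau_2\abs{\cdot}^{-s/2}$ is, up to the explicit harmless constant involving $v_\bullet$, the product $\gamma(s,\tau_1\times\tau_2^\vee,\psi)\gamma(-s,\tau_1^\vee\times\tau_2,\psi^{-1})$ displayed in the introduction; reading off its value (and, in the self-dual case, its poles and the reducibility point) recovers $f^{\GL}(\tau_1\times\tau_2^\vee)$ exactly as equations \eqref{eq: finv}--\eqref{eq: mupid} recover $f(\pi_1\times\pi_2^\vee)$ on the division algebra side. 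So it suffices to show the two Plancherel measures agree after transporting via $\JL$.

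The key step is therefore a comparison of Plancherel measures: I claim that for $\pi_1,\pi_2\in\widehat{D^*}$,
\[
\mu_{\pi_1,\pi_2}(s)=\mu^{\GL}_{\JL(\pi_1),\JL(\pi_2)}(s)
\]
(with the same normalization of additive characters and measures). The Jacquet--Langlands correspondence is characterized by a character identity on elliptic regular elements, and Plancherel measures for a Levi $M$ are invariants of the inertial class that can be computed from the Harish-Chandra $\mu$-function, which in turn is governed by the poles and zeros of the relevant intertwining operators; the essential input is that $\JL$ preserves the parameter $\lsg$, i.e. $\JL(\pi)=\delta(\sigma,\lsg(\pi))$ as recalled in \S\ref{sec: JL}. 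Concretely, Corollary \ref{cor: muiota} shows that $\mu_\pi(s)$ depends only on $(\tor,\lsg,d_\pi)$ through the combination appearing in \eqref{eq: plnch}, and the corresponding $\GL_{2\rk}(F)$ Plancherel measure for the Speh-type datum $\delta(\sigma,\lsg)\otimes\delta(\sigma,\lsg)$ depends only on the analogous cuspidal data; one checks these match by a direct computation with the Zelevinsky segments, using that the reducibility point for $I_P(\pi,s)$ is $\lsg(\pi)$ and that the unramified-twist stabilizer $\tor$ is preserved. In the case $\dist_{\pi_1,\pi_2}>0$ the operator $M_P(s)$ is entire (Proposition \ref{prop: olsh}) and both $\mu$-functions are the constant $v_\bullet^2 q^{f}$, so one only needs to match that single constant, which follows once the self-dual case and the multiplicativity of $\gamma$-factors under the $\delta(\sigma,k)$ construction are in hand.

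I would then conclude by unwinding definitions: equation \eqref{eq: muf} (resp. \eqref{eq: mupid}) together with its $\GL$ counterpart turns the identity of Plancherel measures into the identity of conductors, noting that the correcting factors $v_\rk$ and $v_{2\rk}$ (or rather the ratio of normalizing constants on the two sides) have been chosen precisely so that the normalization $f=\rk^2-\rk$ at the trivial representation matches $f^{\GL}$ at the Steinberg representation $\delta(\one,\rk)=\JL(\one)$, which pins down the constant unambiguously. The main obstacle is the Plancherel-measure comparison itself: one must be careful that the normalizations of intertwining operators, Haar measures on the unipotent radicals, and additive characters are genuinely the same on the two sides, and that the passage from $\GL_2(D)$ to $\GL_{2\rk}(F)$ does not introduce a spurious power of $q$. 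This is where invoking the known compatibility of $\JL$ with the local constants of pairs (equivalently, Shahidi's theory identifying Plancherel measures with products of $\gamma$-factors on both sides) does the real work; the rest is bookkeeping with the explicit formulas already derived.
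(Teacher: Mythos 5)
Your architecture is the same as the paper's (compare Plancherel measures across $\JL$, use Shahidi/JPSS to read off $q^{f^{\GL}}$ on the split side, pin the remaining constant with $\pi=\one\leftrightarrow$ Steinberg), but the pivotal step is not justified. The claim $\mu_{\pi_1,\pi_2}(s)=\mu^{\GL}_{\JL(\pi_1),\JL(\pi_2)}(s)$ \emph{including the constant} is essentially the whole content of the lemma, and your proposed verification --- ``a direct computation with the Zelevinsky segments, using that $\JL$ preserves $\lsg$ and $\tor$'' --- only controls the pole/zero structure of $\mu$ in $q^{-s}$, i.e.\ the ratio of $L$-factors. By \eqref{eq: muf} and \eqref{eq: mupid}, the conductor sits in the multiplicative constant, which on the division-algebra side is governed by $\inv_{\pi_1,\pi_2}$ (equivalently, in the self-dual case, by $d_\pi$ via Corollary \ref{cor: muiota}); it is \emph{not} a function of $(\tor,\lsg)$, and two representations with the same $\tor$ and $\lsg$ will in general have different conductors. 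Likewise, in the case $\dist_{\pi_1,\pi_2}>0$ ``matching that single constant'' is the entire assertion, and it does not follow from the self-dual case plus multiplicativity of $\gamma$-factors: multiplicativity lives entirely on the $\GL$ side and says nothing about the scalar $\mu_{\pi_1,\pi_2}$ on $\GL_2(D)$. Your fallback, ``the known compatibility of $\JL$ with the local constants of pairs,'' is circular --- that compatibility is (a form of) the statement being proved --- and Shahidi's identification of Plancherel measures with $\gamma$-factors applies to the quasi-split group $\GL_{2\rk}(F)$, not a priori to the inner form $\GL_2(D)$.

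The paper supplies exactly the missing input: by \cite{MR2131140}*{Theorem 7.2} (which rests on Badulescu's transfer \cites{MR1951441,MR2024649}), $\JL$ carries the Plancherel density on the Bernstein component of $I_P(\pi_1\otimes\pi_2,s)$ to that of $I_{P'}(\pi_1'\otimes\pi_2',s)$ and preserves formal degrees, $d'_{\pi_i'}=d_{\pi_i}$; combined with Waldspurger's description \cite{MR1989693} of the density as $\mu_{\pi_1,\pi_2}(s)\,d_{\pi_1}d_{\pi_2}\,ds$ on both sides, this yields $\mu'_{\pi_1',\pi_2'}(s)\sim\mu_{\pi_1,\pi_2}(s)$ with a proportionality constant independent of the representations. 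Only then do Shahidi \cite{MR729755} and JPSS \cite{MR701565} convert $\mu'$ into $q^{f^{\GL}}$ times the explicit $L$-ratio, and the representation-independent constant is fixed by the case $\pi_1=\pi_2=\one$ (Steinberg) --- the one step your proposal does carry out correctly. To repair your argument you must cite or reprove precisely this transfer of Plancherel measure and formal degree under $\JL$; neither Corollary \ref{cor: muiota} nor segment combinatorics can substitute for it.
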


\begin{proof}
The Lemma is closely related to \cite{MR2131140}*{Theorem 7.2}, which relies on \cites{MR1951441, MR2024649}.
We follow the discussion of \cite{MR2131140}.
%We follow the argument in [loc. cit.].
%We use the Jacquet--Langlands correspondence between $G=\GL_2(D)$ and $G'=\GL_{2\rk}(F)$.
%It is a bijection
%\[
%JLL:\Irr_2(G)\rightarrow\Irr_2(G')
%\]
%between the corresponding sets of square-integrable representations of the group.
Let $\omega=\omega_{\pi_1}\omega_{\pi_2}$.
Let $\mu_{\pl,\omega}^G$ be the Plancherel measure on the set $\Irr_t(G,\omega)$ of irreducible tempered representations of $G$ with central character $\omega$. Similarly for $G'$.
Consider the torus
\[
\trs_{\pi_1,\pi_2}=\{s\in\C:\Re s=0\}/\{s:\pi_1\abs{\Nrd}^{s/2}\otimes\pi_2\abs{\Nrd}^{-s/2}\simeq\pi_1\otimes\pi_2\}.
\]
By \cite{MR1989693}, the restriction $\mu_{\pl,\pi_1\otimes\pi_2}^G$ of $\mu_{\pl,\omega}^G$ to $\{I_P(\pi_1\otimes\pi_2,s):s\in\trs_{\pi_1,\pi_2}\}$ is the pushforward of the measure on $\trs_{\pi_1,\pi_2}$ given by
\[
\mu_{\pi_1,\pi_2}(s)d_{\pi_1}d_{\pi_2}\ ds
\]
where $ds$ is a suitably normalized Lebesgue measure on $\trs_{\pi_1,\pi_2}$.

Let $\pi'_i=\JL(\pi_i)$, $i=1,2$.
Note that
\[
\trs_{\pi_1,\pi_2}=\{s\in\C:\Re s=0\}/\{s:\pi_1'\abs{\det}^{s/2}\otimes\pi'_2\abs{\det}^{-s/2}\simeq\pi'_1\otimes\pi'_2\}.
\]Let $P'$ be the parabolic subgroup of $G'$ consisting of block upper triangular matrices with blocks of size $\rk$
and $I_{P'}(\pi'_1\otimes\pi'_2,s)$ the representation of $G'$ parabolically induced from $\pi_1'\abs{\det}^{s/2}\otimes\pi_2'\abs{\det}^{-s/2}$.
Once more, the restriction $\mu_{\pl,\pi_1'\otimes\pi_2'}^{G'}$ of $\mu_{\pl,\omega}^{G'}$ to $\{I_{P'}(\pi'_1\otimes\pi'_2,s):s\in\trs_{\pi_1,\pi_2}\}$ is the pushforward of the measure on $\trs_{\pi_1,\pi_2}$ given by
\[
\mu'_{\pi_1',\pi_2'}(s)d'_{\pi'_1}d'_{\pi'_2}\ ds
\]
where $d'_{\pi'_i}$ is the formal degree and $\mu'_{\pi_1',\pi_2'}(s)$ is defined using intertwining operators as in \S\ref{sec: pl} .

By \cite{MR2131140}*{Theorem 7.2}, for suitable Haar measures on $G$ and $G'$, the measures 
$\mu_{\pl,\pi'_1\otimes\pi'_2}^{G'}$ is the pushforward of
$\mu_{\pl,\pi_1\otimes\pi_2}^G$ under the bijection $I_P(\pi_1\otimes\pi_2,s)\leftrightarrow I_{P'}(\pi'_1\otimes\pi'_2,s)$.
Moreover, $d'_{\pi'_i}=d_{\pi_i}$. Therefore 
\[
\mu'_{\pi_1',\pi_2'}(s)\sim\mu_{\pi_1,\pi_2}(s)
\]
where the proportionality constant is independent of the $\pi_i$'s. 
On the other hand, by \cite{MR729755}
\[
\mu'_{\pi_1',\pi_2'}(s)\sim
q^{f^{\GL}(\pi'_1\times{\pi'_2}^\vee)}\frac{L(1-s,{\pi'_1}^\vee\times\pi'_2)L(1+s,\pi'_1\times{\pi'_2}^\vee)}
{L(s,\pi'_1\times{\pi'_2}^\vee)L(-s,{\pi'_1}^\vee\times\pi'_2)}.
\]
By \cite{MR701565} we conclude
\[
\mu'_{\pi_1',\pi_2'}(s)\sim q^{f^{\GL}(\pi'_1\times{\pi_2'}^\vee)}\cdot\begin{cases}
\frac{(1-q^{-\tor(s+s_0)})(1-q^{\tor(s+s_0)})}{(1-q^{-\tor (s+s_0+\lsg)})(1-q^{-\tor (\lsg-s-s_0)})}&\text{if }\pi_1\simeq\pi_2\abs{\Nrd}^{s_0},\\
1&\text{otherwise.}\end{cases}
\]
Comparing with \eqref{eq: muf}, \eqref{eq: mupid} we conclude that
\[
f(\pi_1\times\pi_2^\vee)-f^{\GL}(\pi_1'\times{\pi_2'}^\vee)
\]
is a constant which is independent of $\pi_1$, $\pi_2$.
The lemma follows by examining the case where $\pi_1=\pi_2=\one$ (for which $\pi_1'=\pi'_2$ is the Steinberg representation of $\GL_{\rk}(F)$).
%\[
%\frac{L(s,\pi'_1\times{\pi'_2}^\vee)}{L(s+1,\pi'_1\times{\pi'_2}^\vee)}=
%\begin{cases}\frac{1-q^{-\tor_1(s+s_0+r_1)}}{1-q^{-\tor_1(s+s_0)}}&\text{if }\pi_1\simeq\pi_2\abs{\Nrd}^{s_0}\\
%1&\text{otherwise.}\end{cases}
%\]
\end{proof}

As before, for any $\sigma\in\Irr_c\GL_{\rk}(F)$ we write $\tor(\sigma)$ for the size of the group of unramified
characters such that $\sigma\otimes\omega\simeq\sigma$.
Similarly, for any $\sigma_i\in\Irr_c\GL_{n_i}(F)$, $i=1,2$ we write $\tor(\sigma_1,\sigma_2)$ for the number of
unramified characters $\omega$ such that $\sigma_2\simeq\sigma_1\otimes\omega$.
Thus, $\tor(\sigma_1,\sigma_2)=\tor(\sigma_1)=\tor(\sigma_2)$ if $\tor(\sigma_1,\sigma_2)>0$ (which implies $n_2=n_1$).
We set
\[
\sw(\sigma_1,\sigma_2)=\frac{\tilde f^{\GL}(\sigma_1\times\sigma_2^\vee)}{n_1n_2}
\]
where
\[
\tilde f^{\GL}(\sigma_1\times\sigma_2^\vee)=f(\sigma_1\times\sigma_2^\vee)+\tor(\sigma_1,\sigma_2).
\]

We can now recover Theorem \ref{thm: ultra} (in its slightly stronger form, as in \cite{MR3664814}).

\begin{proposition} [\cite{MR3664814}] \label{prop: main}
Let $\sigma_i$ be irreducible cuspidal representations of $\GL_{n_i}(F)$, $i=1,2,3$. Then
\[
\sw(\sigma_1,\sigma_3)\le\max(\sw(\sigma_1,\sigma_2),\sw(\sigma_2,\sigma_3)).
\]
In particular
\[
\sw(\sigma_1,\sigma_2)\ge\max(\sw(\sigma_1,\sigma_1),\sw(\sigma_2,\sigma_2)).
\]
\end{proposition}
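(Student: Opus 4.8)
The plan is to deduce Proposition \ref{prop: main} from its division algebra counterpart, Proposition \ref{prop: CSdiv}, via the Jacquet--Langlands correspondence, using the dictionary already set up in Lemma \ref{lem: cp}. The essential point is to match the quantity $\sw(\sigma_1,\sigma_2)$ on the $\GL$ side with the quantity $\tilde f(\pi_1\times\pi_2^\vee)/(d_{\pi_1}d_{\pi_2})$ on the division algebra side, at least up to a factor that is constant across all the representations involved (and hence harmless for an ultrametric inequality).

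First I would address the reduction to the cuspidal case being in range of $\JL$. Given cuspidal $\sigma_i \in \Irr_c\GL_{n_i}(F)$, one cannot directly apply $\JL$ unless $n_i$ divides some common degree $\rk$. So I would fix $\rk$ to be (a multiple of) $\operatorname{lcm}(n_1,n_2,n_3)$, let $D$ be the central division algebra of degree $\rk$ over $F$, and note that $\delta(\sigma_i, \rk/n_i) \in \Irr_2\GL_\rk(F)$ lies in the image of $\JL$; write $\pi_i = \JL^{-1}(\delta(\sigma_i,\rk/n_i))\in\widehat{D^*}$. By the description of $\JL$ recalled after Lemma \ref{lem: cp} (the paragraph on $\JL(\pi)=\delta(\sigma,\lsg)$), we have $\lsg(\pi_i) = \rk/n_i$ and the cuspidal datum underlying $\pi_i$ is exactly $\sigma_i$. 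The next step is to translate conductors: by Lemma \ref{lem: cp}, $f(\pi_1\times\pi_2^\vee) = f^{\GL}(\JL(\pi_1)\times\JL(\pi_2)^\vee) = f^{\GL}(\delta(\sigma_1,\rk/n_1)\times\delta(\sigma_2,\rk/n_2)^\vee)$, and I would invoke the standard formula for Rankin--Selberg conductors of (essentially square-integrable) representations in terms of their cuspidal supports --- namely $f^{\GL}(\delta(\sigma_1,a_1)\times\delta(\sigma_2,a_2)^\vee)$ is $a_1 a_2$ times $f^{\GL}(\sigma_1\times\sigma_2^\vee)/(\,\text{something})$ plus lower-order terms coming from the $L$-factor contributions; the cleanest route is to use that $\tilde f^{\GL}$ of a square-integrable pair equals (degree ratio) $\times\,\tilde f^{\GL}$ of the cuspidal pair, which is precisely what makes $\sw$ a well-defined invariant of the cuspidal inertial class.

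The key identity I would aim to establish is
\[
\frac{\tilde f(\pi_1\times\pi_2^\vee)}{d_{\pi_1}d_{\pi_2}} = c\cdot\sw(\sigma_1,\sigma_2)
\]
for a constant $c>0$ depending only on $F$ and $\rk$ (not on the $\sigma_i$). To get this I would combine three ingredients: (i) the definition $\tilde f(\pi_1\times\pi_2^\vee) = f(\pi_1\times\pi_2^\vee) + \lsg(\pi_1)\tor(\pi_1,\pi_2)$ together with Lemma \ref{lem: cp}; (ii) the compatibility of $\tor$ under $\JL$, i.e.\ $\tor(\pi_i) = n_i/(\text{conductor-torsion of }\sigma_i)$-type relations, and the fact that $\pi_2$ is an unramified twist of $\pi_1$ iff $\sigma_2$ is an unramified twist of $\sigma_1$, so that $\tor(\pi_1,\pi_2)>0$ iff $\tor(\sigma_1,\sigma_2)>0$; (iii) the formula for $d_{\pi_i}$ --- here I would either use \eqref{eq: dpif}, or better, invoke the known formula relating the dimension $d_\pi$ to the formal degree / conductor on the $\GL$ side (the paragraph after Lemma \ref{lem: cp} and the reference \cite{MR2136524} give $d_\pi$ in terms of $\rk$, $\tor$, $\lsg$). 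Matching all the normalizing factors $v_\rk$, $\tor$, $(1-q^{-\lsg\tor})$ should collapse everything to the displayed proportionality; the point is only that the proportionality constant $c$ is uniform, which can be checked by evaluating both sides on a single reference pair (e.g.\ $\sigma_1=\sigma_2$ trivial character, $\pi_i=\one$, where $\tilde f(\one\times\one^\vee)=f(\one\times\one^\vee)+\rk = \rk^2$ by the normalization remark, $d_{\one}=1$, and $\sw$ of the trivial pair of $\GL_1$ is $0+1=1$ --- so actually I would pick a mildly ramified reference pair to pin down $c$, or argue directly that $c$ cannot depend on the $\sigma_i$ by the same "constant independent of the $\pi_i$" mechanism used at the end of the proof of Lemma \ref{lem: cp}).

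With the proportionality in hand, Proposition \ref{prop: main} is immediate: applying Proposition \ref{prop: CSdiv} to $\pi_1,\pi_2,\pi_3\in\widehat{D^*}$ and dividing through,
\[
\sw(\sigma_1,\sigma_3) = \frac{1}{c}\cdot\frac{\tilde f(\pi_1\times\pi_3^\vee)}{d_{\pi_1}d_{\pi_3}} \le \frac{1}{c}\max\!\left(\frac{\tilde f(\pi_1\times\pi_2^\vee)}{d_{\pi_1}d_{\pi_2}},\ \frac{\tilde f(\pi_2\times\pi_3^\vee)}{d_{\pi_2}d_{\pi_3}}\right) = \max(\sw(\sigma_1,\sigma_2),\sw(\sigma_2,\sigma_3)),
\]
and the second assertion follows by taking $\sigma_3=\sigma_1$ and using symmetry in $\sigma_1,\sigma_2$, exactly as the inequality \eqref{eq: ineq} was specialized in \S\ref{sec: dist}. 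The main obstacle I anticipate is bookkeeping in step (iii): correctly normalizing $d_\pi$, the formal degree, the $v_\rk$ factor, and the $L$-factor contributions to $\tilde f^{\GL}(\delta(\sigma,a)\times\delta(\sigma',a')^\vee)$ so that the dependence on $\sigma_1,\sigma_2$ is genuinely only through $\sw(\sigma_1,\sigma_2)$ with a uniform constant --- this is where one must be careful that the "constant $c$" does not secretly depend on the torsion numbers $\tor(\sigma_i)$ or on $n_i$ in a way that breaks the $\max$ inequality. I would neutralize this risk by the same device used in Lemma \ref{lem: cp}: establish that $\tilde f(\pi_1\times\pi_2^\vee)/(d_{\pi_1}d_{\pi_2})$ and $\sw(\sigma_1,\sigma_2)$ differ by a factor independent of the $\pi_i$ for $\pi_i$ ranging over $\JL^{-1}$ of square-integrable representations with fixed $\lsg$-values dividing $\rk$, then reduce to a single explicit evaluation.
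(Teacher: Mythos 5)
Your overall strategy (transfer to $D^*$ via $\JL$, use Lemma \ref{lem: cp}, then apply Proposition \ref{prop: CSdiv}) is exactly the paper's, and your setup of $\pi_i=\JL^{-1}(\delta(\sigma_i,\rk/n_i))$ with $\lsg(\pi_i)=\rk/n_i$ is correct. But the key identity you propose, namely $\tilde f(\pi_1\times\pi_2^\vee)/(d_{\pi_1}d_{\pi_2})=c\cdot\sw(\sigma_1,\sigma_2)$ with $c$ depending only on $F$ and $\rk$, is false, and the argument built on it does not close. The dimensions $d_{\pi_i}$ are not bounded in terms of $\rk$: by \eqref{eq: dpif}, $d_\pi$ is essentially $q^{f(\pi\times\pi^\vee)/2}$ up to factors involving $\tor$ and $\lsg$, so it varies with the representation; no uniform constant $c$ can absorb it (your reference check with $\pi=\one$, $d_\one=1$, is blind to this, and a ``mildly ramified'' reference pair would only pin down a $c$ that then fails for other pairs). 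The correct analogue of the normalization by $n_1n_2$ on the $\GL$ side is division by $\rk^2$ --- the degree of the ambient group $\GL_\rk$, the same for all three $\pi_i$ --- not by $d_{\pi_1}d_{\pi_2}$; the role of the $d_{\pi_i}$ is already played out inside the definition of $\tilde f$, since $q^{-\tilde f(\pi_1\times\pi_2^\vee)}=v_\rk^2\inv_{\pi_1,\pi_2}/(d_{\pi_1}d_{\pi_2})$ and the ultrametric inequality \eqref{eq: ineq} is an inequality for exactly these dimension-normalized quantities.

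A second, independent, problem is the last display: even if your proportionality held, Proposition \ref{prop: CSdiv} bounds $\tilde f(\pi_1\times\pi_3^\vee)$ by $\max(\tilde f(\pi_1\times\pi_2^\vee),\tilde f(\pi_2\times\pi_3^\vee))$ for the \emph{undivided} quantities; dividing the three terms by the three different numbers $d_{\pi_1}d_{\pi_3}$, $d_{\pi_1}d_{\pi_2}$, $d_{\pi_2}d_{\pi_3}$ does not preserve a max-inequality, so ``dividing through'' is not a legitimate step. The repair is to drop the $d$'s altogether and prove the identity the paper uses: combining Lemma \ref{lem: cp} with the conductor formula for pairs of generalized Steinberg representations, $f^{\GL}(\delta(\sigma_i,a_i)\times\delta(\sigma_j,a_j)^\vee)=a_ia_jf^{\GL}(\sigma_i\times\sigma_j^\vee)+(a_ia_j-\min(a_i,a_j))\tor(\sigma_i,\sigma_j)$, and then adding $\lsg(\pi_i)\tor(\pi_i,\pi_j)=a_i\tor(\sigma_i,\sigma_j)$, one gets $\tilde f(\pi_i\times\pi_j^\vee)=\frac{\rk^2}{n_in_j}\tilde f^{\GL}(\sigma_i\times\sigma_j^\vee)=\rk^2\,\sw(\sigma_i,\sigma_j)$ (note also that your intermediate slogan ``$\tilde f^{\GL}$ of the square-integrable pair equals the degree ratio times $\tilde f^{\GL}$ of the cuspidal pair'' is not quite right when $\sigma_j$ is an unramified twist of $\sigma_i$: on the $\GL$ side the tilde adds $\tor$, whereas what matches is the division-algebra tilde, which adds $\lsg\tor$). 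With the single uniform factor $\rk^2$, Proposition \ref{prop: CSdiv} immediately yields both assertions, exactly as in the paper.
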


\begin{proof}
Let $\rk=n_1n_2n_3$, $\tau_i=\delta(\sigma_i,\frac{\rk}{n_i})\in\Irr_2\GL_{\rk}(F)$, $i=1,2,3$.
Fix a central division algebra $D$ of degree $\rk$ and let $\pi_i\in\widehat{D^*}$ be such that $\JL(\pi_i)=\tau_i$, $i=1,2,3$.
Then $\tor(\pi_i)=\tor(\delta_i)=\tor(\sigma_i)$ and by our assumption $\dist_{\pi_1,\pi_2},\dist_{\pi_1,\pi_3},\dist_{\pi_2,\pi_3}>0$.
By Lemma \ref{lem: cp}, for any $i,j$
\[
f(\pi_i\times\pi_j^\vee)=f^{\GL}(\tau_i\times\tau_j^\vee)=\frac{{\rk}^2}{n_in_j}\tilde f^{\GL}(\sigma_1\times\sigma_2^\vee)+
\tor(\sigma_i,\sigma_j)\frac{\rk}{n_i}(\frac{\rk}{n_i}-1)
\]
so that (since $\tor(\sigma_i,\sigma_j)=\tor(\pi_i,\pi_j)$)
\[
\tilde f(\pi_i\times\pi_j^\vee)=\frac{{\rk}^2}{n_in_j}\tilde f^{\GL}(\sigma_i\times\sigma_j^\vee).
\]
The first part now follows from Proposition \ref{prop: CSdiv}. The second part follows from the first part by taking $\sigma_3=\sigma_1$
and using symmetry.
\end{proof}

\subsection*{Acknowledgement}
I am grateful to Guy Henniart for useful discussions and suggestions.

%\bibliographystyle{amsalpha}
%\bibliography{../Bibfiles/all}
\def\cprime{$'$} 
\begin{bibdiv}
\begin{biblist}

\bib{MR2131140}{article}{
      author={Aubert, Anne-Marie},
      author={Plymen, Roger},
       title={Plancherel measure for {${\rm GL}(n,F)$} and {${\rm GL}(m,D)$}:
  explicit formulas and {B}ernstein decomposition},
        date={2005},
        ISSN={0022-314X},
     journal={J. Number Theory},
      volume={112},
      number={1},
       pages={26\ndash 66},
         url={http://dx.doi.org/10.1016/j.jnt.2005.01.005},
      review={\MR{2131140}},
}

\bib{MR1737224}{article}{
      author={Badulescu, Alexandru~Ioan},
       title={Orthogonalit\'e des caract\`eres pour {${\rm GL}_n$} sur un corps
  local de caract\'eristique non nulle},
        date={2000},
        ISSN={0025-2611},
     journal={Manuscripta Math.},
      volume={101},
      number={1},
       pages={49\ndash 70},
         url={http://dx.doi.org/10.1007/s002290050004},
      review={\MR{1737224 (2000m:22019)}},
}

\bib{MR1951441}{article}{
      author={Badulescu, Alexandru~Ioan},
       title={Correspondance de {J}acquet-{L}anglands pour les corps locaux de
  caract\'eristique non nulle},
        date={2002},
        ISSN={0012-9593},
     journal={Ann. Sci. \'Ecole Norm. Sup. (4)},
      volume={35},
      number={5},
       pages={695\ndash 747},
         url={http://dx.doi.org/10.1016/S0012-9593(02)01106-0},
      review={\MR{1951441 (2004i:11048)}},
}

\bib{MR2024649}{article}{
      author={Badulescu, Alexandru~Ioan},
       title={Un r\'esultat de transfert et un r\'esultat d'int\'egrabilit\'e
  locale des caract\`eres en caract\'eristique non nulle},
        date={2003},
        ISSN={0075-4102},
     journal={J. Reine Angew. Math.},
      volume={565},
       pages={101\ndash 124},
         url={http://dx.doi.org/10.1515/crll.2003.096},
      review={\MR{2024649 (2004m:22025)}},
}

\bib{MR1635145}{article}{
      author={Broussous, P.},
       title={Extension du formalisme de {B}ushnell et {K}utzko au cas d'une
  alg\`ebre \`a division},
        date={1998},
        ISSN={0024-6115},
     journal={Proc. London Math. Soc. (3)},
      volume={77},
      number={2},
       pages={292\ndash 326},
         url={http://dx.doi.org/10.1112/S0024611598000471},
      review={\MR{1635145 (99k:22025)}},
}

\bib{MR3711826}{article}{
      author={Bushnell, C.~J.},
      author={Henniart, G.},
       title={Strong exponent bounds for the local {R}ankin-{S}elberg
  convolution},
        date={2017},
        ISSN={1017-060X},
     journal={Bull. Iranian Math. Soc.},
      volume={43},
      number={4},
       pages={143\ndash 167},
      review={\MR{3711826}},
}

\bib{MR2136524}{article}{
      author={Bushnell, Colin~J.},
      author={Henniart, Guy},
       title={Local {J}acquet-{L}anglands correspondence and parametric
  degrees},
        date={2004},
        ISSN={0025-2611},
     journal={Manuscripta Math.},
      volume={114},
      number={1},
       pages={1\ndash 7},
         url={https://doi.org/10.1007/s00229-004-0452-2},
      review={\MR{2136524}},
}

\bib{MR3664814}{article}{
      author={Bushnell, Colin~J.},
      author={Henniart, Guy},
       title={Higher ramification and the local {L}anglands correspondence},
        date={2017},
        ISSN={0003-486X},
     journal={Ann. of Math. (2)},
      volume={185},
      number={3},
       pages={919\ndash 955},
         url={https://doi.org/10.4007/annals.2017.185.3.5},
      review={\MR{3664814}},
}

\bib{MR1634095}{article}{
      author={Bushnell, Colin~J.},
      author={Henniart, Guy},
      author={Kutzko, Philip~C.},
       title={Correspondance de {L}anglands locale pour {${\rm GL}_n$} et
  conducteurs de paires},
        date={1998},
        ISSN={0012-9593},
     journal={Ann. Sci. \'Ecole Norm. Sup. (4)},
      volume={31},
      number={4},
       pages={537\ndash 560},
         url={https://doi.org/10.1016/S0012-9593(98)80106-7},
      review={\MR{1634095}},
}

\bib{MR1606410}{article}{
      author={Bushnell, Colin~J.},
      author={Henniart, Guy~M.},
      author={Kutzko, Philip~C.},
       title={Local {R}ankin-{S}elberg convolutions for {${\rm GL}_n$}:
  explicit conductor formula},
        date={1998},
        ISSN={0894-0347},
     journal={J. Amer. Math. Soc.},
      volume={11},
      number={3},
       pages={703\ndash 730},
         url={https://doi.org/10.1090/S0894-0347-98-00270-7},
      review={\MR{1606410}},
}

\bib{MR1204652}{book}{
      author={Bushnell, Colin~J.},
      author={Kutzko, Philip~C.},
       title={The admissible dual of {${\rm GL}(N)$} via compact open
  subgroups},
      series={Annals of Mathematics Studies},
   publisher={Princeton University Press, Princeton, NJ},
        date={1993},
      volume={129},
        ISBN={0-691-03256-4; 0-691-02114-7},
         url={https://doi.org/10.1515/9781400882496},
      review={\MR{1204652}},
}

\bib{MR771672}{incollection}{
      author={Deligne, P.},
      author={Kazhdan, D.},
      author={Vign\'eras, M.-F.},
       title={Repr\'esentations des alg\`ebres centrales simples
  {$p$}-adiques},
        date={1984},
   booktitle={Representations of reductive groups over a local field},
      series={Travaux en Cours},
   publisher={Hermann, Paris},
       pages={33\ndash 117},
      review={\MR{771672}},
}

\bib{MR1411044}{article}{
      author={Heiermann, Volker},
       title={Sur l'espace des repr\'esentations irr\'eductibles du groupe de
  {G}alois d'un corps local},
        date={1996},
        ISSN={0764-4442},
     journal={C. R. Acad. Sci. Paris S\'er. I Math.},
      volume={323},
      number={6},
       pages={571\ndash 576},
      review={\MR{1411044 (98a:11166)}},
}

\bib{MR701565}{article}{
      author={Jacquet, H.},
      author={Piatetskii-Shapiro, I.~I.},
      author={Shalika, J.~A.},
       title={Rankin-{S}elberg convolutions},
        date={1983},
        ISSN={0002-9327},
     journal={Amer. J. Math.},
      volume={105},
      number={2},
       pages={367\ndash 464},
         url={http://dx.doi.org/10.2307/2374264},
      review={\MR{701565 (85g:11044)}},
}

\bib{MR0499010}{article}{
      author={Ol{\cprime}{\v{s}}anski{\u\i}, G.~I.},
       title={Intertwining operators and complementary series in the class of
  representations of the full matrix group over a locally compact division
  algebra that are induced by parabolic subgroups},
        date={1974},
     journal={Mat. Sb. (N.S.)},
      volume={93(135)},
       pages={218\ndash 253, 326},
      review={\MR{0499010 (58 \#16988)}},
}

\bib{MR700135}{article}{
      author={Rogawski, Jonathan~D.},
       title={Representations of {${\rm GL}(n)$} and division algebras over a
  {$p$}-adic field},
        date={1983},
        ISSN={0012-7094},
     journal={Duke Math. J.},
      volume={50},
      number={1},
       pages={161\ndash 196},
         url={http://projecteuclid.org/getRecord?id=euclid.dmj/1077303004},
      review={\MR{700135 (84j:12018)}},
}

\bib{MR729755}{article}{
      author={Shahidi, Freydoon},
       title={Fourier transforms of intertwining operators and {P}lancherel
  measures for {${\rm GL}(n)$}},
        date={1984},
        ISSN={0002-9327},
     journal={Amer. J. Math.},
      volume={106},
      number={1},
       pages={67\ndash 111},
         url={http://dx.doi.org/10.2307/2374430},
      review={\MR{729755 (86b:22031)}},
}

\bib{MR1070599}{article}{
      author={Shahidi, Freydoon},
       title={A proof of {L}anglands' conjecture on {P}lancherel measures;
  complementary series for {$p$}-adic groups},
        date={1990},
        ISSN={0003-486X},
     journal={Ann. of Math. (2)},
      volume={132},
      number={2},
       pages={273\ndash 330},
         url={http://dx.doi.org/10.2307/1971524},
      review={\MR{1070599 (91m:11095)}},
}

\bib{MR1159430}{article}{
      author={Shahidi, Freydoon},
       title={Twisted endoscopy and reducibility of induced representations for
  {$p$}-adic groups},
        date={1992},
        ISSN={0012-7094},
     journal={Duke Math. J.},
      volume={66},
      number={1},
       pages={1\ndash 41},
         url={http://dx.doi.org/10.1215/S0012-7094-92-06601-4},
      review={\MR{1159430 (93b:22034)}},
}

\bib{MR1748913}{article}{
      author={Shahidi, Freydoon},
       title={Poles of intertwining operators via endoscopy: the connection
  with prehomogeneous vector spaces},
        date={2000},
        ISSN={0010-437X},
     journal={Compositio Math.},
      volume={120},
      number={3},
       pages={291\ndash 325},
         url={http://dx.doi.org/10.1023/A:1002038928169},
        note={With an appendix by Diana Shelstad},
      review={\MR{1748913 (2001i:11060)}},
}

\bib{MR1989693}{article}{
      author={Waldspurger, J.-L.},
       title={La formule de {P}lancherel pour les groupes {$p$}-adiques
  (d'apr\`es {H}arish-{C}handra)},
        date={2003},
        ISSN={1474-7480},
     journal={J. Inst. Math. Jussieu},
      volume={2},
      number={2},
       pages={235\ndash 333},
         url={https://doi.org/10.1017/S1474748003000082},
      review={\MR{1989693}},
}

\bib{MR1166506}{article}{
      author={Zink, Ernst-Wilhelm},
       title={Representation theory of local division algebras},
        date={1992},
        ISSN={0075-4102},
     journal={J. Reine Angew. Math.},
      volume={428},
       pages={1\ndash 44},
         url={http://dx.doi.org/10.1515/crll.1992.428.1},
      review={\MR{1166506 (93i:11141)}},
}

\end{biblist}
\end{bibdiv}

\end{document}